\newtheorem{theorem}{Theorem}[section]
\newtheorem{conjecture}{Conjecture}[section]
\newtheorem{claim}{Claim}[section]
\newtheorem{lem}{Lemma}[section]
\newtheorem{corollary}[lem]{Corollary}
\newenvironment{wst}
{\setlength{\leftmargini}{1.5\parindent}
 \begin{itemize}
 \setlength{\itemsep}{-1.1mm}}
{\end{itemize}}
\begin{document}
\setlength{\baselineskip}{15pt}
\begin{center}{\bf \Large A {Fan-type} condition for cycles in $1$-tough and $k$-connected $(P_2\cup kP_1)$-free graphs \footnote{This work is financially supported by NSFC grants 11871239 and 11971196.}}
\vspace{4mm}

{\large Zhiquan Hu,\ \ Jie Wang,\ \ Changlong Shen\footnote{Corresponding author. \\
\hspace*{5mm}{\it Email addresses}: hu\_zhiq@aliyun.com (Z.Q. Hu),\ wangjie@mails.ccnu.edu.cn (J. Wang),\ clshen2019@foxmail.com (C.L. Shen).}}\vspace{2mm}

School of Mathematics and Statistics, and Key Laboratory of Nonlinear Analysis and Applications (Ministry of Education), Central China Normal University, Wuhan 430079, P. R. China
\end{center}

\date{}
\noindent {\bf Abstract}:
For a  graph $G$, let $\mu_k(G):=\min~\{\max_{x\in S}d_G(x):~S\in \mathcal{S}_k\}$, where $\mathcal{S}_k$ is the set consisting of all independent sets $\{u_1,\ldots,u_k\}$ of $G$ such that some vertex, say $u_i$ ($1\leq i\leq k$),  is at distance two from every other vertex in it.
A graph $G$ is $1$-tough if for each cut set $S\subseteq V(G)$, $G-S$ has at most $|S|$ components.  Recently, Shi and Shan \cite{Shi} conjectured that for each integer $k\geq 4$, being $2k$-connected is  sufficient for  $1$-tough $(P_2\cup kP_1)$-free graphs to be hamiltonian, which was confirmed by Xu et al. \cite{Xu} and Ota and Sanka \cite{Ota2}, respectively. In this article, we generalize the above results through the following Fan-type theorem: Let $k$ be an integer with $k\geq 2$ and let $G$ be a $1$-tough and $k$-connected $(P_2\cup kP_1)$-free graph with $\mu_{k+1}(G)\geq\frac{7k-6}{5}$, then $G$ is hamiltonian  or the Petersen graph.

\vskip 0.1cm

\noindent{\bf Keywords:} 1-tough, $(P_2\cup kP_1)$-free, Fan-type condition, Hamiltonian cycle

\noindent{\bf AMS subject classification:} 05C38, 05C45.
\section{Introduction}
In this paper, we consider only finite and simple graphs. The terminology not
defined here can be found in \cite{Diestel}.
For two integers $s$ and $t$ with $s\leq t$, define $[s,t] := \{i\in \mathbb{Z}:~ s\leq i\leq t\}$, $[s,t):=[s,t]\setminus\{t\}$ and $(s,t]:=[s,t]\setminus\{s\}$.

Let $G$ be a graph. We denote by $V(G)$, $E(G)$ and $\omega(G)$ the set of vertices, the set of edges, and the number of components of $G$, respectively. For $x\in V(G)$, we use $N_G(x)$ to denote the set of neighbors of $x$ in $G$ and define $d_G(x):=|N_G(x)|$.
The \textit{distance} between two vertices $x$ and $y$, denoted by $dist_G(x,y)$, is the length of a shortest path in $G$ between $x$ and $y$.
For $S\subseteq V(G)$, we define $G[S]$ as the subgraph of $G$ induced by $S$ and denote $G-S:=G[V(G)\setminus S]$. Define $N_G(S) :=\cup_{x\in S} N_G(x)$. We call $S$ a \emph{$k$-essential independent set of $G$ with center $x$} if $S$ is an independent set of $G$ with order $k$ such that $x\in S$ and $dist_G(x,v)=2$ for all $v\in S\setminus\{x\}$. The essential independence number of $G$, denoted by $\alpha_e(G)$, is the maximum integer $k$ such that $G$ contains a $k$-essential independent set. Note that if $\alpha_e(G)\geq k$ then there is a $k$-essential independent set in $G$.

For an integer $k\geq 2$, let $\mathcal{S}_k$ be the set  of all $k$-essential independent sets of $G$. Define
$$
\mu_k(G):=\left\{
            \begin{array}{ll}
              \min~\{\max_{x\in S}d_G(x): ~S\in \mathcal{S}_k\}, & \hbox{if $\alpha_e(G)\geq k$;} \\
              +\infty, & \hbox{otherwise.}
            \end{array}
          \right.
$$
Throughout this paper, we skip the subscript $G$ if no confusion may arise.

Let $R$ be a cycle or path of $G$. We always  assume that $R$ has a given direction.
For $x\in V(R)$, we use $x_R^{+}$ (resp. $x_R^{-}$) to denote its successor (resp. predecessor) on $R$. Define $x_R^{-2}=(x_R^-)_R^-$. If $X\subset V(R)$, then we set
$X_R^{-}:=\{x_R^{-}: x\in X\}$ and $X_R^{+}:=\{x_R^{+}: x\in X\}$.
For $x,y\in V(R)$, let $x\overrightarrow{R}y$ denote the path on $R$ from $x$ to $y$ in the chosen direction of $R$ if it exists. The same path, in the reverse order,
is denoted by $y\overleftarrow{R}x$. Let $R[x,y]=x\overrightarrow{R}y$ and $R[x,y)=x\overrightarrow{R}y_R^-$.
When necessary, we use $\overrightarrow{R}$ to emphasize the given direction of $R$.
If $H$ is a component of $G-V(R)$, then we let $N_R(H):=N_G(V(H))\cap V(R)$ and $N_R^-(H):=(N_R(H))_R^-$.
A path $P$ is an \emph{$(x,y)$-path} if it is a path from $x$ to $y$.

Let $t$  be a nonnegative real number. A graph $G$ is \emph{$t$-tough} if $|S|\geq t\cdot\omega(G-S)$ for each vertex cut $S$ of $G$.  A graph $G$ is \textit{hamiltonian} if it contains a \textit{Hamiltonian cycle}, i.e., a cycle containing every vertex of $G$.

In 1973, Chv\'{a}tal \cite{chvatal} proposed the following toughness conjecture.
\begin{conjecture}\label{cong-chv}
\emph{(}Chv\'{a}tal \emph{\cite{chvatal}}\emph{)} There exists a constant $t_0$ such that every $t_0$-tough graph is hamiltonian.
\end{conjecture}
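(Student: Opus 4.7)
The plan is to attempt Chv\'atal's toughness conjecture, which is one of the central open problems in hamiltonian graph theory and has remained unresolved since 1973. Any direct assault must therefore be speculative, and I will describe the general research strategy rather than claim a real proof.

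The archetypal setup is to argue by contradiction: suppose no such $t_0$ exists, so for every constant $t$ there is a non-hamiltonian $t$-tough graph $G_t$. Fix such a $G$ with $t$ large, take a longest cycle $C$, and let $H$ be a component of $G-V(C)$ (which exists by non-hamiltonicity). Writing $X=N_C(H)$ and $X^+$ for the set of successors of the vertices of $X$ along the chosen direction of $C$, the classical longest-cycle argument of Ore-Bondy-Chv\'atal shows that $X^+\cup\{h\}$ is independent for any $h\in V(H)$, and no two vertices of $X^+$ are adjacent. Combined with $t$-toughness, which yields $|X|\geq t\cdot\omega(G-X)\geq 2t$, this forces $\alpha(G)\geq 2t+1$ together with a large cut whose removal has many components. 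The goal is then to parlay this structural richness into either a cycle extension (contradicting the maximality of $C$) or a Chv\'atal-Erd\H{o}s-type estimate that contradicts non-hamiltonicity.

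The main obstacle is that this last step is precisely where every known approach collapses. Bauer-Broersma-Veldman's constructions show $t_0\geq 9/4$ is necessary, but no finite upper bound has ever been established; the entire interval $[\tfrac{9}{4},\infty)$ remains untouched. A direct proof would seem to require either a closure-type theorem that converts raw toughness into a pointwise degree condition strong enough to invoke known hamiltonicity criteria, or a genuinely new structural decomposition of $t$-tough non-hamiltonian graphs, a toughness analogue of Robertson-Seymour style excluded-minor structure. Neither tool is presently available, and producing one appears to be a problem comparable in difficulty to the conjecture itself.

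Given the impossibility of a direct proof at the current state of the art, the realistic incremental strategy, and the one to which this paper contributes, is to verify the conjecture on successively broader graph classes where the forbidden substructures constrain how independent sets sit relative to a longest cycle. For $1$-tough $k$-connected $(P_2\cup kP_1)$-free graphs one can locate a $(k{+}1)$-essential independent set among $X^+\cup\{h\}$, so a Fan-type hypothesis such as $\mu_{k+1}(G)\geq\tfrac{7k-6}{5}$ supplies the high-degree vertex needed to drive a hopping/insertion argument and force the desired extension, with the Petersen graph emerging as the unique tight exception. Iteratively relaxing the forbidden pattern, reducing the toughness threshold, and weakening the accompanying degree hypothesis is the only currently viable route toward Chv\'atal's conjecture itself.
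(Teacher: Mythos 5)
You were asked to prove Conjecture \ref{cong-chv}, Chv\'atal's toughness conjecture of 1973, which remains open; the paper states it purely as motivation and contains no proof of it, so there is no argument of the authors to compare yours against. Their actual contribution, Theorem \ref{thm-main}, settles only the special case of $1$-tough, $k$-connected $(P_2\cup kP_1)$-free graphs under the Fan-type hypothesis $\mu_{k+1}(G)\geq\frac{7k-6}{5}$ (with the Petersen graph as the sole exception). Your submission is candid that it is a research programme rather than a proof, and judged as a proof it has a total gap: the decisive step --- converting the structure you extract (an independent set $X^+\cup\{h\}$ of size at least $2t+1$ together with large cuts leaving many components) into either an extension of the longest cycle $C$ or a Chv\'atal--Erd\H{o}s-type contradiction --- is exactly the open problem, and you rightly decline to pretend to carry it out. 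Nothing you wrote is mathematically wrong; it is simply not a proof of the statement, and no correct one currently exists in the literature.

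For what it is worth, your survey is accurate and aligns well with what the paper actually does. The independence of $X^+\cup\{h\}$ (for $C$ a longest cycle, $H$ a component of $G-V(C)$, $h\in V(H)$, $X=N_C(H)$) is the classical longest-cycle argument and is precisely the mechanism of the paper's Lemma \ref{lem-2}, except that the authors work with predecessors $X^-$ rather than successors and, crucially, use $(P_2\cup kP_1)$-freeness to show $|V(H)|=1$ and that $X^-\cup\{u_0\}$ is a \emph{$(k+1)$-essential} independent set, i.e.\ every non-center vertex lies at distance exactly two from $u_0$; that distance-two refinement is what makes the hypothesis $\mu_{k+1}(G)\geq\frac{7k-6}{5}$ applicable at all, a point your sketch glosses over. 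Your citation of the Bauer--Broersma--Veldman constructions giving the lower bound $t_0\geq\frac94$ is correct, and your one-line description of the paper's endgame --- a high-degree vertex guaranteed by the $\mu_{k+1}$ condition driving insertion/rotation arguments, with the Petersen graph emerging at equality --- is a fair summary of Claims \ref{cla-3.1}--\ref{cla-3-13}. As a referee's verdict: an honest and well-informed non-proof of a statement that the paper itself does not prove.
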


For a graph $H$, a graph $G$ is said to be $H$-free if it does not contain any induced subgraph isomorphic to $H$.
Partial results related to Conjecture \ref{cong-chv} have been obtained in various restricted classes of graphs \cite{Bauer2006}, such as planar graphs \cite{Gerlach}, claw-free graphs \cite{Broersma1}, co-comparability graphs \cite{Deogun}, chordal graphs \cite{Chen,Kabela}, $k$-trees $(k\geq 2)$ \cite{Broersma2}, $2K_2$-free graphs \cite{Broersma,Shan,Ota}, $(P_2\cup P_3)$-free graphs \cite{Shan2}.
In this paper, we deal with Conjecture \ref{cong-chv} in $(P_2\cup kP_1)$-free graphs, where $P_2\cup kP_1$ is the graph consisting of one edge and $k$ isolated vertices.
In this graph family, there are many results related to conjecture \ref{cong-chv}. Among them are the following theorems.

\begin{theorem}
\emph{(}Nikoghosyan \emph{\cite{Nikoghosyan}}\emph{)} Every $1$-tough $(P_2\cup P_1)$-free graph is hamiltonian.
\end{theorem}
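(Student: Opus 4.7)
The strategy begins with a structural reduction. The three-vertex graph $P_2\cup P_1$ consists of a single edge plus an isolated vertex; its complement on three vertices is the path $P_3$. Hence $G$ is $(P_2\cup P_1)$-free if and only if $\overline{G}$ is $P_3$-free, which is equivalent to $\overline{G}$ being a disjoint union of cliques. Translating back, $G$ must be a complete multipartite graph, and I will write $G=K_{n_1,\ldots,n_r}$ with parts $V_1,\ldots,V_r$ of sizes $n_1\geq n_2\geq\cdots\geq n_r$, and $n:=|V(G)|=\sum_{i=1}^{r}n_i$.

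Next I convert $1$-toughness into a numerical inequality on $n_1$. After dismissing trivialities ($1$-toughness excludes $r=1$ with $n\geq 2$ and forces $n\geq 3$; the case $n_1=1$ gives $G=K_n$, which is already Hamiltonian), I take $S:=V(G)\setminus V_1$, a cut set of size $n-n_1$ whose removal leaves exactly $\omega(G-S)=n_1\geq 2$ isolated vertices. The $1$-toughness inequality $|S|\geq \omega(G-S)$ yields $n-n_1\geq n_1$, i.e.\ $n_1\leq n/2$.

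It then remains to produce a Hamiltonian cycle in any complete multipartite graph $K_{n_1,\ldots,n_r}$ with $n\geq 3$, $r\geq 2$, and $n_1\leq n/2$. Any cyclic ordering of $V(G)$ in which consecutive vertices lie in distinct parts is automatically such a cycle, so it suffices to exhibit one. A standard approach places the $n_1$ vertices of $V_1$ at the odd positions $1,3,\ldots,2n_1-1$ of a $C_n$ and distributes the $n-n_1\geq n_1$ remaining vertices across the remaining positions, ensuring no two consecutive positions receive vertices from the same part; a short induction on $r$, exploiting $n_i\leq n_1\leq n/2$ for every $i$, carries this through. The genuine content lies entirely in the first paragraph's complement-based reduction; once $G$ is identified as a complete multipartite graph, both the toughness translation and the cycle construction are routine bookkeeping, so I anticipate no serious obstacle beyond verifying the boundary cases.
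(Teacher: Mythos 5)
The paper does not prove this statement: it is quoted as a known result of Nikoghosyan and used as background, so there is no internal proof to compare against. Your argument is correct and self-contained. The complement reduction is the right key observation: $\overline{P_2\cup P_1}=P_3$, and induced-$P_3$-free graphs are exactly disjoint unions of cliques, so $G$ is complete multipartite, say $K_{n_1,\ldots,n_r}$ with $n_1\geq\cdots\geq n_r$. The cut $S=V(G)\setminus V_1$ then gives $n_1\leq n/2$ from $1$-toughness (when $n_1\geq 2$; the case $n_1=1$ is $K_n$). For the final step you could avoid the hands-on cyclic arrangement and its induction entirely by invoking the Chv\'{a}tal--Erd\H{o}s theorem, which the paper itself cites: $K_{n_1,\ldots,n_r}$ has independence number $\alpha=n_1$ and connectivity $\kappa=n-n_1$ (or $n-1$ if complete), so $n_1\leq n/2$ gives $\kappa\geq\alpha$ and hence hamiltonicity for $n\geq 3$; your direct interleaving construction also works but is the one place where detail is elided. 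Two small caveats: $1$-toughness does not by itself force $n\geq 3$ ($K_1$ and $K_2$ have no vertex cut, hence are vacuously $1$-tough yet not hamiltonian), so the theorem implicitly assumes $n\geq 3$ as is conventional; and you should state explicitly that $S=V(G)\setminus V_1$ is indeed a vertex cut because $G-S$ is an independent set on $n_1\geq 2$ vertices.
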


\begin{theorem}\label{Li-1}
\emph{(}Li et al. \emph{\cite{Li}}\emph{)} Let $R$ be an induced subgraph of $P_4$, $P_3\cup P_1$ or $P_2\cup 2P_1$. Then, $R$-free $1$-tough graph on at least three vertices is hamiltonian.
\end{theorem}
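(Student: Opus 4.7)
My plan is to argue by contradiction. Since being $R$-free for an induced subgraph $R$ of $R'$ implies being $R'$-free, it suffices to treat the three maximal cases $R\in\{P_4,P_3\cup P_1,P_2\cup 2P_1\}$. Assume $G$ is a $1$-tough $R$-free graph on $n\geq 3$ vertices that is not hamiltonian. First I would observe that $G$ is $2$-connected: a cut vertex $v$ would give $\omega(G-v)\geq 2>1=|\{v\}|$, violating $1$-toughness. (The smaller-$R$ cases $R\in\{P_1,P_2,2P_1,3P_1,P_3\}$ are either vacuous or force $G$ to be a clique, which is trivially hamiltonian for $n\geq 3$.)

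Next I would take a longest cycle $C=v_1v_2\cdots v_\ell v_1$ with a fixed orientation and a component $H$ of $G-V(C)$, which exists by non-hamiltonicity. Two-connectedness forces $|N_C(H)|\geq 2$. Applying the classical Bondy-Chv\'atal/P\'osa cycle-extension arguments to the pair $(H,C)$ yields the structural properties: (a) $N_C(H)\cap N_C^+(H)=\emptyset$, where $N_C^+(H):=\{v^+:v\in N_C(H)\}$; and (b) $N_C^+(H)$ is an independent set in $G$. In particular, every vertex of $N_C^+(H)$ is anti-complete to $V(H)$.

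With (a) and (b) I would exhibit an induced copy of the forbidden subgraph in each case. For $R=P_2\cup 2P_1$, choose distinct $x_1,x_2\in N_C^+(H)$; if $|V(H)|\geq 2$, any edge $ab$ of the connected graph $H$ yields an induced $P_2\cup 2P_1$ on $\{a,b,x_1,x_2\}$, contradicting $(P_2\cup 2P_1)$-freeness. If $V(H)=\{u\}$, I would vary the choice of two attachments $v_i,v_j\in N_C(u)$ and look for the needed edge either along $C$ (using chord analysis near the successors) or among other components of $G-V(C)$; when every candidate edge is blocked, a small family of blocking vertices forms a separating set whose component count exceeds its size, contradicting $1$-toughness. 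The case $R=P_3\cup P_1$ is analogous, now seeking an induced $P_3$ (inside $H$ or along $C$) together with one isolated vertex from $N_C^+(H)$. For $R=P_4$ the graph $G$ is a cograph, so $G=G_1+G_2$ is a join (connectivity rules out a top-level disjoint union); $1$-toughness then forces a balance between $|V(G_1)|$ and $|V(G_2)|$, and hamiltonicity follows by an alternating construction across the two sides.

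The main obstacle I foresee is the singleton case $V(H)=\{u\}$ in the $(P_2\cup 2P_1)$- and $(P_3\cup P_1)$-free cases: the small local edge or path needed to complete the forbidden induced subgraph must then be found on $C$ itself, and ruling out every candidate requires careful chord-analysis near the successors $N_C^+(H)$. This is where the $1$-toughness hypothesis carries the most weight, converting the absence of the forbidden subgraph into a separating set whose component count strictly exceeds its size.
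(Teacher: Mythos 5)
First, a point of comparison: the paper contains no proof of this statement. It is Theorem~\ref{Li-1}, quoted as known background from Li, Broersma and Zhang \cite{Li}, so there is no internal argument to measure yours against; I can only judge the proposal on its own terms. The skeleton is sound: the reduction to the three maximal graphs, the derivation of $2$-connectedness from $1$-toughness, the longest-cycle facts that $N_C(H)\cap N_C^+(H)=\emptyset$ and that $N_C^+(H)$ is independent and anti-complete to $V(H)$, and the immediate contradiction for $R=P_2\cup 2P_1$ when $|V(H)|\geq 2$ are all correct and standard.

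However, two of the three cases are left with genuine holes, so this is a plan rather than a proof. In the singleton case $V(H)=\{u\}$ for $R\in\{P_2\cup 2P_1,\ P_3\cup P_1\}$ --- which, as you concede, is where essentially all the work lies --- the sentence ``a small family of blocking vertices forms a separating set whose component count exceeds its size'' names the desired conclusion without producing the set. The shape of the missing argument can be read off from Section~3 of the present paper, which proves the much harder $(P_2\cup kP_1)$ analogue: one first shows that every edge of $C$ meets $N(X^-)$ (Claim~\ref{cla-3.3}); then that if no edge of $C$ has both ends in $N(X^-)$, the cycle alternates, $N(X^-)\subseteq V(C)$, and $V(G)\setminus N(X^-)$ is forced to be independent, whence $\omega(G-N(X^-))>|N(X^-)|$ (Claim~\ref{cla-3.4}); and one must then still dispose of the configuration in which such an edge \emph{does} exist, which occupies the remainder of that section. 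None of this machinery appears in your sketch. For $R=P_4$, writing the connected cograph as a join $G=G_1+G_2$ is right, but ``an alternating construction'' requires that $G_1$ and $G_2$ admit path partitions into a common number $s\leq\min(|V(G_1)|,|V(G_2)|)$ of paths; since $G_1$ and $G_2$ need not themselves be $1$-tough, controlling their path-partition numbers requires an induction on the cotree (equivalently, a scattering-number computation), which is not supplied. Either fill in these two arguments in full or simply cite \cite{Li}, as the paper does.
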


\begin{theorem}\label{Hatfield}
\emph{(}Hatfield and Grimm \emph{\cite{Hatfield})} If $G$ is a $3$-tough $(P_2\cup 3P_1)$-free graph on at least three vertices, then $G$ is hamiltonian.
\end{theorem}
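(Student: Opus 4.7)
The plan is to argue by contradiction. Suppose $G$ is $3$-tough and $(P_2\cup 3P_1)$-free on $n\geq 3$ vertices but not hamiltonian. Three-toughness immediately gives $|S|\geq 3\omega(G-S)\geq 6$ for every vertex cut $S$, so $G$ is $6$-connected and $\delta(G)\geq 6$. Fix a longest cycle $C$ of $G$ with a chosen orientation and set $D=V(G)\setminus V(C)\neq\emptyset$.

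Pick any component $H$ of $G[D]$ and let $X=N_C(V(H))$, with $X^+$ the set of $C$-successors of $X$. Standard longest-cycle exchanges, routed through an internal path of $H$, yield: (i) $X^+\cup\{h\}$ is independent for every $h\in V(H)$; and (ii) no two vertices of $X$ are $C$-consecutive, so $X\cap X^+=\emptyset$ and $|V(C)|\geq 2|X|$. Since $X$ is a vertex cut separating $V(H)$ from $V(C)\setminus X$, three-toughness gives $|X|\geq 6$.

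Next I would convert the $(P_2\cup 3P_1)$-free hypothesis into the following workhorse: for every edge $uv\in E(G)$, at most two vertices of $\{h\}\cup X^+$ are non-adjacent to both $u$ and $v$, since otherwise three such vertices together with $uv$ induce a forbidden $P_2\cup 3P_1$. Applied first to an edge $hh'$ inside a component of $D$ of order $\geq 2$, this is already a contradiction because $\{h,h'\}\cup X^+$ is independent and $|X^+|\geq 6$; hence every component of $G-V(C)$ is a singleton. The same workhorse, applied to the cycle edges $x_ix_i^+$ and to chords incident to $X^+$, then forces $V(C)\setminus(X\cup X^+)$ to be small and heavily dominated by $N_G(X^+)$. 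Combined with the longest-cycle property one either exhibits a longer cycle (by rerouting $C$ through a singleton component of $D$) or produces a vertex cut $T$ with $\omega(G-T)>|T|/3$, contradicting three-toughness.

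The main obstacle I expect is this final counting step. The initial reductions (each component of $D$ is a singleton and $\{h\}\cup X^+$ is a large independent set) are routine, but turning these into a numerical contradiction requires comparing the arcs of $C$ between consecutive elements of $X$ against the common neighborhoods of the several singleton off-cycle vertices. The delicate case is when several singletons $\{h_i\}$ share neighborhoods of size exactly six and all arcs between consecutive elements of $X\cup X^+$ are uniformly short; there one must chain the $(P_2\cup 3P_1)$-free workhorse with three-toughness applied to a cut built from the union of the successor sets of the individual $h_i$ before the contradiction finally emerges.
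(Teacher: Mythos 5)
This statement is not actually proved in the paper: it is quoted from Hatfield and Grimm, and within this paper it is subsumed by the main result --- Corollary \ref{cor-main1} with $k=3$ gives that every $\frac{3}{2}$-tough $(P_2\cup 3P_1)$-free graph is hamiltonian or the Petersen graph, and a $3$-tough graph cannot be the Petersen graph (its toughness is $4/3$). So the only argument available here for comparison is the rotation--extension proof of Theorem \ref{thm-main} in Section 3, of which your attempt would be a special case.

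Your first two paragraphs are correct and match the paper's preliminaries: the independence of $X^+\cup\{h\}$, the bound $|X|\geq 6$ from $3$-toughness, the ``workhorse'' (which is Lemma \ref{lem-1}(ii) in disguise), and the conclusion that every component of $G-V(C)$ is a singleton (Lemma \ref{lem-2} with $k=3$) are all sound. The genuine gap is your entire third paragraph. Asserting that the workhorse ``forces $V(C)\setminus(X\cup X^+)$ to be small and heavily dominated by $N_G(X^+)$'' and that one then ``either exhibits a longer cycle or produces a vertex cut $T$ with $\omega(G-T)>|T|/3$'' is not an argument: no candidate cut $T$ is constructed, no count of the components of $G-T$ is given, and no specific rerouting of $C$ is exhibited. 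This is precisely where all the difficulty lives. In the paper's proof of the stronger theorem this step occupies Claims \ref{cla-3.1}--\ref{cla-3-17}: one must first locate an edge of $C$ with both ends in $N(X^-)$ (Claim \ref{cla-3.4}, which uses toughness through a parity/alternation argument along $C$, not a naive cut), then run an iterated rotation scheme (the good-path/good-segment machinery of Claims \ref{cla-3-7}--\ref{cla-3-10}) to confine the neighborhoods of the vertices $x_i^{-2}$ to short arcs, and only then close a numerical contradiction against a degree bound. You explicitly acknowledge that the ``final counting step'' is the obstacle, but acknowledging it does not discharge it; as written, the proposal establishes only the routine reductions and leaves the theorem unproved.
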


\begin{theorem}\label{ShiShan}
\emph{(}Shi and Shan \emph{\cite{Shi}}\emph{)}. Let $k\geq4$ be an integer and let $G$ be a $4$-tough and $2k$-connected $(P_2\cup kP_1)$-free graph. Then $G$ is hamiltonian.
\end{theorem}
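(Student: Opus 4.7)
I would argue by contradiction. Assume that $G$ is a $4$-tough, $2k$-connected, $(P_2\cup kP_1)$-free graph with $k\geq 4$ that is not hamiltonian. Fix a longest cycle $C$ of $G$ with a chosen orientation, and let $H$ be a component of $G-V(C)$. Set $A:=N_C(H)=\{a_1,\ldots,a_t\}$ in the cyclic order of $C$; by $2k$-connectivity, $t\geq 2k$. The plan is to build a structural picture around $H$, translate $(P_2\cup kP_1)$-freeness into attachment constraints along $C$, and finally violate $4$-toughness by exhibiting a suitable cut.

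The first move is a standard longest-cycle exchange: if $a_i^+a_j^+\in E(G)$ with $i\neq j$, one reroutes through an $H$-path from $a_i$ to $a_j$ to produce a cycle longer than $C$, a contradiction. Hence $A^+:=\{a_i^+:1\leq i\leq t\}$ is independent in $G$. An analogous exchange forbids edges from any $h\in V(H)$ to $a_i^+\notin A$. After separately treating the degenerate configuration where $A$ and $A^+$ overlap (i.e.\ some $a_{i+1}=a_i^+$) by a finer rerouting, one obtains an independent set $I_0\subseteq V(H)\cup A^+$ with $|I_0|\geq 2k+1$.

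Next, $(P_2\cup kP_1)$-freeness is applied edge by edge along $C$. For each cycle edge $a_ia_i^+$, any $k$ vertices of $I_0\setminus\{a_i^+\}$ jointly non-adjacent to both endpoints would, together with that edge, induce a forbidden $P_2\cup kP_1$; so at most $k-1$ vertices of $I_0$ can avoid $N(a_i)\cup N(a_i^+)$, equivalently the pair $\{a_i,a_i^+\}$ dominates at least $|I_0|-k\geq k+2$ further elements of $I_0$. Summing over $i$, together with the longest-cycle exchange controlling where such edges are allowed, forces the cyclic segments between consecutive attachments to be short and forbids most cross-edges between segments.

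Finally I would apply $4$-toughness. The structural information should yield a cut set $S$, taking $A$ together with a controlled collection of auxiliary $C$-vertices dictated by the previous step, whose removal leaves $H$, the internal cyclic segments, and possibly other components of $G-V(C)$ as separate components of $G-S$. A bound of the form $|S|\leq \lambda t$ combined with $\omega(G-S)\geq t+1$ would force $4(t+1)>\lambda t$ for sufficiently large $t$, contradicting $4$-toughness. The main obstacle is the bookkeeping in this last step: the coarse inequality "$\leq k-1$ bad vertices per edge" is not immediately tight when $t$ is close to $2k$, so one must either iterate the exchange (inserting $H$-paths rather than single edges into $C$) or aggregate several cycle edges at once to expose the forbidden $P_2\cup kP_1$. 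The assumption $k\geq 4$ is what should ultimately provide the slack needed to make the $4$-toughness violation decisive.
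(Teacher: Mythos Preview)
This theorem is not proved in the paper; it is quoted as a prior result of Shi and Shan \cite{Shi}, and the paper's own contribution (Theorem~\ref{thm-main}) subsumes it only indirectly, via the observation that a $2k$-connected graph has $\mu_{k+1}(G)\geq\delta(G)\geq 2k\geq\frac{7k-6}{5}$. So there is no ``paper's own proof'' to compare against, and the paper's route to the same conclusion runs through an entirely different hypothesis (the Fan-type bound on $\mu_{k+1}$) and never uses $4$-toughness at all: toughness enters only once, at the $1$-tough level, to rule out a single parity configuration in Claim~\ref{cla-3.4}.

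As for your proposal itself: the opening moves are fine and standard, but what you have is a plan, not a proof. The decisive step---producing a cut $S$ with $|S|\leq\lambda t$ and $\omega(G-S)\geq t+1$ so that $4$-toughness fails---is left entirely schematic, and you explicitly flag the bookkeeping as the ``main obstacle.'' That is precisely where the content of Shi and Shan's argument lives; the coarse domination count you sketch (each edge $a_ia_i^+$ dominates at least $|I_0|-k$ vertices of $I_0$) does not by itself control the segment structure tightly enough to manufacture the required cut, and your suggestion to ``iterate the exchange'' or ``aggregate several cycle edges'' is not a substitute for actually carrying it out. In short, the approach is plausible for the original Shi--Shan theorem, but the proposal stops before the hard part and cannot be evaluated as a proof.
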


Moreover, Shi and Shan \cite{Shi} proposed the following  conjecture.

\begin{conjecture}\label{conj-shi}
\emph{(}Shi and Shan \emph{\cite{Shi}}\emph{)}. Let $k\geq4$ be an integer and let $G$ be a $1$-tough and $2k$-connected $(P_2\cup kP_1)$-free graph. Then $G$ is hamiltonian.
\end{conjecture}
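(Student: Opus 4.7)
The plan is a longest-cycle argument, proceeding by contradiction. Assume $G$ satisfies the hypotheses but is not hamiltonian. Fix a longest cycle $C$ of $G$ with a chosen orientation and let $H$ be a connected component of $G-V(C)$, which exists by assumption. Write $X:=N_C(H)$ and $X^+:=X_C^+$. The first step is to record the standard consequences of $C$ being a longest cycle together with $2k$-connectivity. By $2k$-connectivity $|X|\geq 2k$; by the usual rotation/replacement argument the set $X^+$ is an independent set of $G$, no vertex of $H$ is adjacent to $X^+$, and $\{h\}\cup X^+$ is independent for every $h\in V(H)$ (any such edge or chord would allow splicing in a path through $H$ to produce a longer cycle). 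Thus we have an independent set of size at least $2k+1$ located ``above'' $H$ on $C$.

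The second step extracts structural information from the $(P_2\cup kP_1)$-free hypothesis. For any edge $e=xy\in E(G)$ and any $k$ pairwise non-adjacent vertices each non-adjacent to both $x$ and $y$, the induced subgraph is $P_2\cup kP_1$. Applied to the edge $vv^+$ with $v\in X$ and $v^+\notin X$, this forces the majority of vertices of $(\{h\}\cup X^+)\setminus\{v^+\}$ to be adjacent to $v$: indeed they are pairwise independent and (by Step~1) non-adjacent to $v^+$, so at most $k-1$ of them can avoid $N(v)\cup N(v^+)$, leaving at least $(2k+1)-1-(k-1)=k+1$ of them as neighbors of $v$. Iterating this over $v\in X$ yields a dense bipartite-like pattern of chords between $X$ and $X^+\cup\{h\}$.

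The third step converts these chords into a longer cycle. The core observation is that a chord $v_jv_i^+$ with $i\neq j$, combined with a path through $H$ joining $v_i$ and $v_j$ (available since $H$ is connected and $v_i,v_j\in N_C(H)$), allows a splice: traverse one arc of $C$, detour through $H$, then reinsert via the chord, producing a cycle strictly longer than $C$. The abundance of chords from Step~2 guarantees at least one pair $(i,j)$ that yields a valid splice. For the degenerate case where an arc $I_v=C(v,v_C^{++})$ is empty (consecutive $X$-vertices, so $vv^+\in E(C)$ rather than a chord), I would invoke $1$-toughness: $X$ is a vertex cut of size $|X|$ separating $H$ from each non-empty arc, and the bound $\omega(G-X)\leq |X|$ limits how many arcs can degenerate without collapsing toughness.

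The main obstacle is the careful bookkeeping in the third step: turning one chord into a longer cycle is not automatic, because the $H$-path between $v_i$ and $v_j$ must avoid creating a shortcut, and empty arcs complicate the identification of suitable chord endpoints. I expect the $1$-toughness hypothesis to enter precisely here, together with the count of ``good'' chords from Step~2, to force in every configuration either a longer cycle (contradicting the choice of $C$) or a vertex set whose removal leaves more than $|S|$ components (contradicting $1$-toughness). The hypothesis $k\geq 4$ should provide exactly the slack in the inequalities $|X|\geq 2k$, ``$k+1$ chords per vertex'', and $\omega(G-X)\leq|X|$ needed to close the final case analysis; pinning down this slack is the technical heart of the argument.
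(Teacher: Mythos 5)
Your Steps 1 and 2 are sound and correspond to what the paper establishes in Lemma \ref{lem-2} and Lemma \ref{lem-1}(ii) (see also Claims \ref{cla-3.2} and \ref{cla-3-5}): the component $H$ is a single vertex, $X^{\pm}\cup\{h\}$ is independent for every vertex $h$ off $C$, and every vertex of $N(X^{\pm})$ has at least $|X|-k+2$ neighbours in $X^{\pm}\cup\{h\}$, so each $v\in X$ sends many chords into $X^{+}$. (A minor correction: your set may contain $v$ itself when two $X$-vertices are consecutive on $C$, which costs one in the count; this is harmless.)

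The gap is in Step 3, and it is not bookkeeping --- it is the entire content of the theorem. The ``core observation'' you rely on, that a chord $v_jv_i^{+}$ together with an $H$-path from $v_i$ to $v_j$ splices into a cycle longer than $C$, is false. Trace it: after $v_i\to H\to v_j\to v_i^{+}$, the forward arc from $v_i^{+}$ runs into the already-used vertex $v_j$ before it can return to $v_i$, and every other way of closing up omits one of the two arcs of $C$ determined by $v_i$ and $v_j$. Only chords with \emph{both} endpoints in $X^{+}$ (or both in $X^{-}$, or two such chords from a common off-cycle vertex) give an immediate longer cycle, and those are exactly the chords your Step 1 has already excluded; the chords Step 2 produces run from $X$ to $X^{+}$ and can coexist with non-hamiltonicity (the Petersen graph is the extremal configuration in Theorem \ref{thm-main}). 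Exploiting them requires first rerouting $C$ into hamiltonian paths of $G[V(C)]$ with controlled endpoints, which is what occupies the bulk of Section 3: the edge $uv$ with $u,v\in N(X^{-})$ from Claim \ref{cla-3.4} (this, not your ``empty arc'' case, is the one place $1$-toughness is used), the iterated good-path/good-segment constructions of Claims \ref{cla-3-7}--\ref{cla-3-10}, and the count of disjoint index intervals against the degree bound in Claims \ref{cla-3-11}--\ref{cla-3-13}. Under your hypotheses that count closes with room to spare, since $\delta(G)\ge 2k>\frac{7k-6}{5}$ and the Petersen graph is not $8$-connected --- which is precisely how the conjecture follows from Theorem \ref{thm-main} --- but without the rerouting machinery no contradiction can be extracted from the chords alone.
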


Recently, Xu et al. \cite{Xu} and Ota and Sanka \cite{Ota2} proved the following two results,  both of which confirms Conjecture \ref{conj-shi}.

\begin{theorem}\emph{(}Xu et al. \emph{\cite{Xu}}\emph{)}.\label{Xu}
Let $k\geq 1$ be an integer and let $G$ be a $1$-tough and $\max \{2 k-2,2\}$-connected $(P_2\cup kP_1)$-free graph. Then $G$ is hamiltonian.
\end{theorem}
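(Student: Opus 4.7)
My plan is a longest-cycle argument exploiting $(P_2\cup kP_1)$-freeness and $1$-toughness. The small cases are immediate: for $k=1$, Nikoghosyan's theorem applies, and for $k=2$, Theorem~\ref{Li-1} with $R=P_2\cup 2P_1$ applies. So I may assume $k\geq 3$, so that $G$ is $(2k-2)$-connected. Suppose for contradiction that $G$ is non-hamiltonian; fix a longest cycle $\overrightarrow{C}$ of $G$ and a component $H$ of $G-V(C)$. By Menger's theorem, $|N_C(H)|\geq 2k-2\geq k$. Three standard longest-cycle facts apply: no two vertices of $N_C(H)$ are consecutive on $C$; $N_C^-(H)$ is independent; and no vertex of $N_C^-(H)$ is adjacent to any vertex of $V(H)$.

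First, I combine these facts with $(P_2\cup kP_1)$-freeness to reduce to $|V(H)|=1$. Otherwise, pick an edge $h_1h_2\in E(H)$ (which exists since $H$ is connected) together with any $k$ vertices of $N_C^-(H)$; these $k+2$ vertices induce $P_2\cup kP_1$, contradicting the hypothesis. Henceforth $H=\{h\}$, and $N:=N_C(h)=\{x_1,\ldots,x_m\}$ (in cyclic order on $C$) has $m\geq 2k-2$. Let $N^-$ and $N^+$ be the predecessor/successor sets along $\overrightarrow{C}$; both are independent, disjoint from $N\cup\{h\}$, and contain no neighbour of $h$. Thus $\{h\}\cup N^-$ and $\{h\}\cup N^+$ are independent sets of size $m+1$ in $G$.

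Next, I extract chord edges by applying $(P_2\cup kP_1)$-freeness to carefully chosen $P_2$'s. For each $i$, consider the edge $hx_i$ and the candidate set $N^-\setminus\{x_i^-\}$ of size $m-1$, which is entirely non-adjacent to $h$; those members not adjacent to $x_i$ give an induced $P_2\cup tP_1$, so $t\leq k-1$, yielding at least $m-k$ chord neighbours of $x_i$ in $N^-$. Symmetry gives the same bound for $N^+$. A double count now produces some $x_j^-\in N^-$ (resp.\ $x_j^+\in N^+$) with at least $m-k$ neighbours in $N$, a strong chord abundance near $h$.

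The final, and hardest, step is to turn this chord abundance into either a cycle longer than $C$ or a violation of $1$-toughness. Applying $1$-toughness to the cut $S=N$ of size $m$: $\{h\}$ is a component of $G-S$, and non-consecutiveness of $N$ leaves $m$ nonempty arcs of $C\setminus S$, so if no two arcs merge in $G-S$ we get $\omega(G-S)\geq m+1>|S|$, a contradiction. Hence certain chord edges (or additional isolated vertices off $C$) must merge arcs, and these very chords should permit a rerouting of $C$ through $h$ producing a longer cycle. Making this dichotomy airtight is the real obstacle; it requires careful case analysis on the positions of chord endpoints along $C$, possibly combined with a parallel invocation of the $N^+$-bound to handle the residual configurations where a simple rerouting fails.
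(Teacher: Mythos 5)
A preliminary remark: the paper does not prove Theorem~\ref{Xu} directly --- it is quoted from Xu et al.\ and recovered as a consequence of the stronger Theorem~\ref{thm-main} (for $k\geq 2$ one checks $\max\{2k-2,2\}\geq k$ and $\mu_{k+1}(G)\geq\kappa(G)\geq 2k-2\geq\frac{7k-6}{5}$, and the Petersen graph is excluded by $4$-connectivity), so the relevant comparison is with the machinery of Sections~2 and~3.

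Your preliminary reductions are correct and coincide with that machinery: the reduction to $|V(H)|=1$ is Lemma~\ref{lem-2}, the independence of $N^-\cup\{h\}$ and $N^+\cup\{h\}$ is Claim~\ref{cla-3.2}, and your ``at least $m-k$ chord neighbours'' bound is exactly Lemma~\ref{lem-1}(ii) applied to the independent set $N^-\cup\{h\}$. The genuine gap is the entire final step, which you yourself flag as ``the real obstacle'': you assert that the chords forced by $1$-toughness (via the cut $S=N$) ``should permit a rerouting of $C$ through $h$ producing a longer cycle,'' but this is precisely the implication that fails in general and that consumes essentially all of the published proofs. A chord joining the interiors of two arcs of $C-N$ merges components of $G-N$ without creating any insertion point for $h$: to lengthen $C$ you need two neighbours of $h$ to become consecutive on the rerouted cycle, and the available chords (e.g.\ from $x_i$ into $N^-$) do not directly produce this --- note $N^-$ itself is independent, so the most obvious rerouting chords $x_i^-x_j^-$ never exist. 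The paper's route around this obstacle is the long chain Claims~\ref{cla-3.3}--\ref{cla-3-13}: first a toughness/alternation argument (Claim~\ref{cla-3.4}) producing an edge $uv$ of $C$ with both ends in $N(X^-)$, then the ``good path / good segment'' formalism (Claims~\ref{cla-3-7}--\ref{cla-3-10}) to track which neighbourhoods survive after iterated surgeries on $C$, culminating in the arithmetic contradiction $m\leq\frac{7k-6}{5}$ (which, under your hypothesis $m\geq 2k-2$, would already finish the job for $k\geq 2$). None of this is routine case analysis, and your proposal contains no substitute for it, so as written the proof is incomplete at its decisive step.
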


\begin{theorem}\emph{(}Ota and Sanka \emph{\cite{Ota2}}\emph{)}.\label{Ota2}
Let $k\geq 2$ be an integer and let $G$ be a $1$-tough and $k$-connected $(P_2\cup kP_1)$-free graph with $\delta(G) \geq \frac{3(k-1)}{2}$. Then $G$ is hamiltonian or the Petersen graph.
\end{theorem}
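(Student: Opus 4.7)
The plan is to proceed by contradiction. Suppose $G$ satisfies the hypotheses yet is neither hamiltonian nor the Petersen graph. Choose a longest cycle $C$ of $G$ with a fixed orientation $\overrightarrow{C}$, let $H$ be a component of $G-V(C)$ (nonempty by assumption), and write $N_C(H)=\{x_1,\ldots,x_s\}$ in the cyclic order of $\overrightarrow{C}$. Standard longest-cycle arguments give $s\ge k$ (by $k$-connectivity, using a fan from $H$ to $C$), the successor set $X^+:=\{x_1^+,\ldots,x_s^+\}$ is independent in $G$, no vertex of $X^+$ is adjacent to any vertex of $H$, and for each $i$ the arc $x_i\overrightarrow{C}x_{i+1}$ has length at least $2$.

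Fix $h\in V(H)$. Because each $x_i^+$ lies at distance $2$ from $h$ via $x_i$, for any $k$ indices $i_1<\cdots<i_k$ the set $\{h,x_{i_1}^+,\ldots,x_{i_k}^+\}$ is a $(k+1)$-essential independent set centered at $h$. This is the engine for using $(P_2\cup kP_1)$-freeness: whenever an edge $uv$ can be located in $G-(\{h\}\cup X^+)$ with $\{u,v\}$ non-adjacent to $h$ and to some $k$-subset of $X^+$, an induced $P_2\cup kP_1$ appears, a contradiction. Pushing this idea along each arc $x_i\overrightarrow{C}x_{i+1}$ should force the arcs to be short and each interior vertex to be heavily attached to $X^+$; combined with the absence of $H$--$X^+$ edges, this yields a rigid local structure around $N_C(H)$.

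Next I would bring in the minimum-degree hypothesis. For each $x_i^+$, its neighbours lie on $C$, avoid $X^+$, and avoid $V(H)$. Using the short-arc structure above, a double count of edges from $X^+$ into the arcs of $C$ with $\delta(G)\ge \frac{3(k-1)}{2}$ should produce an insertable chord: a sequence of $C$-edges that can be rerouted through $H$ to form a longer cycle, contradicting the choice of $C$. The remaining task is to show that equality throughout the counting forces $G$ to be the Petersen graph. Tracking the inequalities, one finds that equality demands $k=3$, $s=3$, $|V(H)|=1$, $G$ to be $3$-regular, and the arcs of $C$ to partition $V(G)\setminus V(H)$ into three segments of equal length; the $(P_2\cup 3P_1)$-freeness then pins down the remaining cross-edges and reconstructs the Petersen graph uniquely.

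The main obstacle will be the concluding analysis that isolates the Petersen graph. Because Petersen is a genuine non-hamiltonian example of the hypotheses at $k=3$, the proof cannot afford any slack in the degree and arc-length counts at that endpoint: every inequality used in the short-arc and edge-counting step must be tracked under equality to decide whether $G$ admits a cycle-extending chord or coincides with Petersen. A secondary difficulty is the low-$k$ regime: for $k=2$ the conclusion already follows from Theorem \ref{Li-1}, but for $k=3$ and $k=4$ the bound $\frac{3(k-1)}{2}$ is close to the $k$-connectivity threshold so the arc analysis has very little slack, and a tailored argument for these small cases may be required before the uniform degree-counting can be carried out.
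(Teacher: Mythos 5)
First, a point of orientation: the paper does not prove this statement directly --- it is quoted from Ota and Sanka \cite{Ota2} and recovered here as a special case of Theorem \ref{thm-main}, since $\mu_{k+1}(G)\geq\delta(G)$ and $\frac{3(k-1)}{2}\geq\frac{7k-6}{5}$ for $k\geq 3$ (the case $k=2$ following by integrality of $\delta$). So the benchmark is the proof of Theorem \ref{thm-main}, and your opening moves do match it: longest cycle $C$, independence of $X^+\cup\{h\}$, essential independent sets centered at a vertex off $C$, and an equality analysis aimed at the Petersen graph. (In fact Lemma \ref{lem-2} gives you more than you claim: $(P_2\cup kP_1)$-freeness with $k$-connectivity forces \emph{every} component of $G-V(C)$ to be a single vertex unconditionally, not merely at the equality endpoint of your counting.) But your proposal has a fatal structural gap: $1$-toughness is never used anywhere in the outline. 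Without it the statement is false. The graph $K_{n,n+1}$ is $(P_2\cup P_1)$-free --- every vertex outside an edge $uv$ lies in one side of the bipartition and is therefore adjacent to $u$ or to $v$ --- hence $(P_2\cup kP_1)$-free for all $k$; it is $n$-connected with $\delta=n\geq\frac{3(k-1)}{2}$ for $n$ large, yet it is neither hamiltonian nor the Petersen graph. Consequently no combination of freeness and degree double-counting of the kind you sketch can ``produce an insertable chord''; some step must consume toughness. In the paper this happens at exactly one place, Claim \ref{cla-3.4}: if no edge of $C$ had both ends in $N(X^-)$, the vertices of $C$ would alternate in and out of $N(X^-)$, whence $\omega(G-N(X^-))>|N(X^-)|$, contradicting $1$-toughness (with Lemma \ref{lem-1}(i) ruling out edges of $G-N(X^-)$ off the cycle).

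Second, the two steps you explicitly defer are the entire content of the theorem, and the first is not even the right mechanism: freeness does not ``force the arcs to be short'' at this stage of the argument. What it actually yields, via Lemma \ref{lem-1}(ii), is the lower bound $|N(z)\cap X^-|\geq m-k+2$ for every $z\in N(X^-)$ (Claim \ref{cla-3-5}), and the real engine is the rotation machinery built on the edge supplied by toughness: the good-path claims \ref{cla-3-7}--\ref{cla-3-10} confine $N(x_a^{-2})\cap X^-$, $N(x_b^{-2})\cap X^-$, $N(x_c^{-2})\cap X^-$ to nested index intervals, and summing the four interval inequalities \eqref{eqn-9}, \eqref{eqn-12}, \eqref{eqn-19}, \eqref{eqn-21} gives $m\leq\frac{7k-6}{5}$, against $m\geq\mu_{k+1}(G)\geq\delta(G)$ from Claim \ref{cla-3.1}. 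Your anticipated equality conclusions are also not quite what emerges: equality forces $(k,m)=(5t+3,7t+3)$ for some $t\geq 0$ (Claim \ref{cla-3-13}), and one must then propagate the ``bad interval'' structure around all of $C$ (Claims \ref{cla-3-14}--\ref{cla-3-17}) and kill $t\geq 1$ by a further rotation before $k=m=3$ and the Petersen reconstruction become available; $3$-regularity is an output of this analysis, not an input. Your closing paragraph correctly identifies where the difficulty sits, but identifying it is not resolving it: as written, the proposal is a program rather than a proof, and its central counting step cannot succeed in the form stated because it ignores the one hypothesis the theorem cannot do without.
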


A well-known theorem of Fan \cite{Fan} states that every $2$-connected non-complete graph $G$ with $\mu_2(G)\geq \frac{|G|}{2}$ is hamiltonian. In this paper, we  generalize Theorems \ref{Xu} and \ref{Ota2} by proving the following Fan-type theorem.

\begin{theorem}\label{thm-main}
Let $k\geq 2$ be an integer and let $G$ be a $1$-tough and $k$-connected $(P_2\cup kP_1)$-free graph. If $\mu_{k+1}(G)\geq\frac{7k-6}5$, then $G$ is hamiltonian or the Petersen graph.
\end{theorem}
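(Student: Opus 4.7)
The plan is to argue by contradiction: assume $G$ is neither hamiltonian nor the Petersen graph, and use the Fan-type hypothesis to locate a vertex whose neighborhood is rich enough to extend a longest cycle. Fix a longest cycle $C$ of $G$, chosen with a secondary optimality such as maximising $|N_C(V(H))|$ or $|V(H)|$ over components $H$ of $G-V(C)$; non-hamiltonicity ensures such $H$ exists, and $k$-connectivity yields $|X|\geq k$ for $X:=N_C(V(H))=\{x_1,\dots,x_s\}$ listed cyclically on $\overrightarrow{C}$, with successor set $X^+=\{x_1^+,\dots,x_s^+\}$. The usual cycle-rerouting lemmata (central also to \cite{Xu,Ota2}) give three properties that drive the whole argument: \emph{(i)} $X^+$ is independent in $G$; \emph{(ii)} no vertex of $X^+$ has a neighbor in $V(H)$; \emph{(iii)} no vertex of $V(H)$ has a neighbor in the interior of any arc $x_i^+\overrightarrow{C}x_{i+1}^-$. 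Failing any one of these would immediately yield a cycle strictly longer than $C$ by splicing in a detour through $H$.

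The next step is to construct a $(k+1)$-essential independent set so as to invoke the hypothesis $\mu_{k+1}(G)\geq\frac{7k-6}{5}$. Pick $v\in V(H)$ maximising $|N_G(v)\cap X|$. In the simplest case $V(H)=\{v\}$, we have $N_G(v)\supseteq X$, and the paths $v\,x_i\,x_i^+$ certify $dist_G(v,x_i^+)=2$ via (ii); any $k$ of the $x_i^+$'s together with $v$ then form a $(k+1)$-essential independent set centered at $v$. For $|V(H)|\geq 2$ a similar construction goes through after possibly substituting some $x_i^+$ by a distance-$2$ witness drawn from $X^{+2}$ or from within $V(H)$. Invoking $\mu_{k+1}(G)\geq\frac{7k-6}{5}$, some vertex $w$ of this set satisfies $d_G(w)\geq\frac{7k-6}{5}$. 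By (i)--(iii) the neighbors of $w$ are confined to the interiors of the arcs $x_i^+\overrightarrow{C}x_{i+1}^-$, so $w$ has many neighbors scattered along $C-X$. Applying $(P_2\cup kP_1)$-freeness to an edge of $C$ incident with $w$, paired with $k$ pairwise non-adjacent vertices drawn from $X^+\cup V(H)\setminus\{w\}$, forces an adjacency forbidden by (i)--(iii); the resulting configuration yields a cycle strictly longer than $C$, contradicting its maximality.

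The main obstacle I foresee is the extremal count that pins down the coefficient $\frac{7}{5}$ and correctly identifies the Petersen graph as the unique exception. Here $1$-toughness enters decisively: since $X$ separates $V(H)$ from the rest of $G$, we have $\omega(G-X)\leq|X|$, and combining this with (i)--(iii) caps the lengths of the arcs of $C-X$ and hence the degrees of the $x_i^+$'s linearly in $k$. The bound $\frac{7k-6}{5}$ should arise from a three-way double count balancing the $k$ arcs of $C-X$, the neighbors of $X^+$ inside those arcs (restricted by $(P_2\cup kP_1)$-freeness), and the Fan-type degree defect, becoming tight precisely at the Petersen graph when $k=3$. The small cases $k=2$ and $k=3$ can be dispatched via Theorem~\ref{Li-1} and Theorem~\ref{Ota2} respectively, so the real technical work lies in $k\geq 4$, where $\frac{7k-6}{5}<\frac{3(k-1)}{2}$ and the Fan-type hypothesis is strictly weaker than the Ota--Sanka minimum-degree condition; verifying that the resulting case analysis captures the Petersen tightness without spuriously excluding it is, I expect, the hardest part.
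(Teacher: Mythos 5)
Your overall framework (a longest cycle $C$, a component $H$ off $C$, independence of $X^{+}$ and $X^{-}$ forced by rerouting and $(P_2\cup kP_1)$-freeness, and a $(k+1)$-essential independent set built from the off-cycle vertex and the $x_i^-$'s) matches the paper's setup, and your reduction of $k\in\{2,3\}$ to Theorems \ref{Li-1} and \ref{Ota2} is legitimate since $k$-connectivity already supplies $\delta(G)\geq \frac{3(k-1)}{2}$ there. But the proposal stops exactly where the proof begins. A first concrete gap: you only conclude that \emph{some} vertex $w$ of the essential independent set has degree at least $\frac{7k-6}{5}$, whereas the argument needs this bound on the off-cycle vertex $u_0$ itself (Claim \ref{cla-3.1}); that transfer requires choosing $(C,u_0)$ to maximize $d_G(u_0)$ over all longest cycles and then rerouting $C$ through $u_0$ so as to expel any $x_i^-$ of larger degree. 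Your secondary optimization (``maximising $|N_C(V(H))|$ or $|V(H)|$'') is left unspecified and the expulsion rerouting is absent, so the degree hypothesis never attaches to a usable vertex.

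Second, and more seriously, the contradiction mechanism you propose cannot work: you claim that pairing an edge of $C$ incident with $w$ against $k$ independent vertices drawn from $X^{+}\cup V(H)$ ``forces an adjacency forbidden by (i)--(iii)'' and hence a longer cycle. The Petersen graph satisfies all of (i)--(iii), is $1$-tough, $3$-connected, $(P_2\cup 3P_1)$-free, and meets $\mu_4=3=\frac{7\cdot 3-6}{5}$ with equality, so no argument of that shape can terminate in a longer cycle; the exceptional graph must survive every such step. What the paper actually does is locate, via $1$-toughness, an edge $uv$ of $C$ with both ends in $N(X^-)$ (Claim \ref{cla-3.4}), then run a chain of ``good path'' reroutings (Claims \ref{cla-3-8}--\ref{cla-3-10}) that confine the $X^-$-neighborhoods of $v$, $x_a^{-2}$, $x_b^{-2}$, $x_c^{-2}$ to nested intervals of $[1,m]$, and finally sum four interval-length inequalities to obtain $m+3\geq \frac{7(m-k)}{2}+6$, i.e.\ $m\leq\frac{7k-6}{5}$; equality then forces $(k,m)=(5t+3,7t+3)$, a propagation of the ``bad interval'' property around $C$ (Claims \ref{cla-3-14}--\ref{cla-3-17}) together with one more rerouting forces $t=0$, and the structure collapses to the Petersen graph. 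None of this counting or propagation is present in, or recoverable from, the three-way double count you gesture at, so the proposal does not establish the theorem for $k\geq 4$.
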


Note that $\mu_{k+1}(G)\geq \delta(G)\geq \kappa(G)$.
Since a non-complete $t$-tough graph is $\lceil 2t\rceil$-connected,
as  corollaries of Theorem \ref{thm-main}, we have the following result, which generalizes Theorem \ref{Hatfield}.

\begin{corollary}\label{cor-main1}
Let $k\geq 3$ be an integer and let $G$ be a $\frac{7k-6}{10}$-tough  $(P_2\cup kP_1)$-free graph. Then, $G$ is hamiltonian or the Petersen graph.
\end{corollary}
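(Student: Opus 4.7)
The plan is to deduce Corollary \ref{cor-main1} from Theorem \ref{thm-main} by verifying, for any graph $G$ satisfying the hypotheses of the corollary, the three additional assumptions required by the theorem: that $G$ is $1$-tough, that $G$ is $k$-connected, and that $\mu_{k+1}(G)\geq \frac{7k-6}{5}$.

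First, since $k\geq 3$, the toughness bound $\frac{7k-6}{10}\geq \frac{15}{10}>1$ is immediate, so $G$ is $1$-tough. For connectivity, I would split off the trivial case $G=K_n$ (hamiltonian whenever $n\geq 3$) and then invoke the standard fact that every non-complete $t$-tough graph is $\lceil 2t\rceil$-connected; this gives $\kappa(G)\geq \lceil\tfrac{7k-6}{5}\rceil$, and since $k\geq 3$ implies $\frac{7k-6}{5}\geq k$, I obtain $\kappa(G)\geq k$, as required. For the Fan-type quantity, I would appeal to the chain $\mu_{k+1}(G)\geq \delta(G)\geq \kappa(G)$ pointed out immediately before the corollary statement, so that $\mu_{k+1}(G)\geq \frac{7k-6}{5}$ follows from the same connectivity bound; the subcase $\alpha_e(G)<k+1$ is handled automatically by the convention $\mu_{k+1}(G)=+\infty$.

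With all hypotheses verified, Theorem \ref{thm-main} then yields directly that $G$ is hamiltonian or the Petersen graph. I do not foresee any real obstacle: the deduction is essentially an arithmetic bookkeeping argument, and the substantive content lies in Theorem \ref{thm-main} itself. The only delicacy is the boundary $k=3$, where the three quantities $\frac{7k-6}{10}$, $\lceil\tfrac{7k-6}{5}\rceil$ and $k$ take values $1.5$, $3$ and $3$ respectively, so every inequality becomes essentially tight; this is precisely why the corollary restricts to $k\geq 3$ rather than $k\geq 2$, since at $k=2$ the assumed toughness would drop to $0.8$ and the derivation of $1$-toughness would break down.
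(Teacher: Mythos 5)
Your deduction is correct and is essentially the paper's own (one-line) derivation: the authors likewise note that $\mu_{k+1}(G)\geq\delta(G)\geq\kappa(G)$ and that a non-complete $t$-tough graph is $\lceil 2t\rceil$-connected, so that for $k\geq 3$ one gets $1$-toughness, $\kappa(G)\geq\lceil\tfrac{7k-6}{5}\rceil\geq k$, and $\mu_{k+1}(G)\geq\tfrac{7k-6}{5}$, and then they invoke Theorem \ref{thm-main}. Your handling of the complete-graph case and of the boundary $k=3$ is consistent with the paper's (implicit) treatment, so there is nothing to add.
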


Let $P^*$ be the Petersen graph. Note that if $k$ is an integer  such that $P^*$ is $k$-connected and $(P_2\cup kP_1)$-free, then $k=\kappa(P^*)=3$. Because $\alpha_e(P^*)>3$,  the following  corollary of Theorem \ref{thm-main} is true.

\begin{corollary}\label{cor-main2}
Let $k\geq 2$ be an integer and let $G$ be a $1$-tough and $k$-connected $(P_2\cup kP_1)$-free graph. If $\alpha_e(G)\leq k$, then $G$ is hamiltonian.
\end{corollary}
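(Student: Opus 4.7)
The plan is to deduce this corollary directly from Theorem~\ref{thm-main}, using only the observations already recorded in the excerpt about the Petersen graph.

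First I would argue that the only non-obvious hypothesis of Theorem~\ref{thm-main}, namely $\mu_{k+1}(G)\ge \frac{7k-6}{5}$, is vacuously satisfied. Since $\alpha_e(G)\le k$, the graph $G$ contains no $(k+1)$-essential independent set, so $\mathcal{S}_{k+1}=\emptyset$. By the very definition of $\mu_{k+1}$ given in the introduction, this means $\mu_{k+1}(G)=+\infty$, and in particular $\mu_{k+1}(G)\ge \frac{7k-6}{5}$. The other hypotheses ($k\geq 2$, $1$-toughness, $k$-connectivity, and $(P_2\cup kP_1)$-freeness) are inherited directly from the corollary.

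Next, I would invoke Theorem~\ref{thm-main} to conclude that either $G$ is hamiltonian (and we are done) or $G$ is the Petersen graph $P^*$. To rule out the latter, I would use the remark preceding the statement: the only integer $k$ for which $P^*$ is simultaneously $k$-connected and $(P_2\cup kP_1)$-free is $k=\kappa(P^*)=3$. So if $G=P^*$ then $k=3$, and since $\alpha_e(P^*)>3$ (also noted in the excerpt), we would get $\alpha_e(G)>k$, contradicting the hypothesis $\alpha_e(G)\le k$. Hence $G$ must be hamiltonian.

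There is really no obstacle here: the whole content of the corollary lies in observing that the $\mu_{k+1}$-hypothesis becomes automatic when $\alpha_e(G)\le k$, combined with the fact that the Petersen graph is itself precluded by its essential independence number. The substantive work has all been absorbed into Theorem~\ref{thm-main}, and what remains is a two-line bookkeeping argument.
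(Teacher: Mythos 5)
Your proposal is correct and is exactly the argument the paper intends: the remark preceding Corollary~\ref{cor-main2} supplies precisely the two facts you use (that $P^*$ being $k$-connected and $(P_2\cup kP_1)$-free forces $k=3$, and that $\alpha_e(P^*)>3$), and the hypothesis $\alpha_e(G)\le k$ makes $\mu_{k+1}(G)=+\infty$ by definition, so Theorem~\ref{thm-main} applies and the Petersen alternative is excluded. Nothing further is needed.
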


In a sense, Corollary \ref{cor-main2} can be seen as an extension of the  classical Chv\'{a}tal-Erd\H{o}s Theorem \cite{chvatal2} that every $k$-connected graph on at least three vertices is hamiltonian if it has independence number at most $k$.

\section{Preliminaries}
In this section, we show some properties of $(P_2\cup kP_1)$-free graphs, which is frequently used in the proof of Theorem \ref{thm-main}.

\begin{lem}
\label{lem-1}
Let $G$ be a $(P_2\cup kP_1)$-free graph and let $A, ~B$  be two independent sets of $G$. Then, the following statements are true:
\begin{wst}
\item[{\rm (i)}] If $|A\cap B|\geq k$, then $A\cup B$ is an independent set of $G$;
\item[{\rm (ii)}] $|N(x)\cap A|\geq |A|-k+1$ holds for all $x\in N(A)$.
\end{wst}
\end{lem}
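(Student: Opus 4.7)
The plan is to prove both parts by the same contradiction strategy: assume the conclusion fails and then exhibit an induced copy of $P_2\cup kP_1$ using the common intersection (for (i)) or the independent set $A$ (for (ii)) to produce the $k$ isolated vertices.

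For part (i), suppose $A\cup B$ is not independent. Since both $A$ and $B$ are independent, any edge inside $A\cup B$ must go between $A\setminus B$ and $B\setminus A$; pick such an edge $ab$ with $a\in A\setminus B$ and $b\in B\setminus A$. Choose $k$ vertices $c_1,\ldots,c_k\in A\cap B$, which is possible because $|A\cap B|\geq k$. Then $c_1,\ldots,c_k$ are pairwise non-adjacent (they lie in the independent set $A$), they are non-adjacent to $a$ (they lie in $A$ together with $a$), and non-adjacent to $b$ (they lie in $B$ together with $b$). Also $a\neq c_i$ since $a\notin B$ while $c_i\in B$, and similarly $b\neq c_i$. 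Hence $G[\{a,b,c_1,\ldots,c_k\}]\cong P_2\cup kP_1$, contradicting $(P_2\cup kP_1)$-freeness.

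For part (ii), fix $x\in N(A)$ and suppose to the contrary that $|A\setminus N(x)|\geq k$; pick $a_1,\ldots,a_k\in A\setminus N(x)$. Because $x\in N(A)$, there exists $y\in A$ with $xy\in E(G)$, and since $A$ is independent, $x\notin A$. Moreover $y\neq a_i$ for every $i$, since $y\in N(x)$ but $a_i\notin N(x)$. The set $\{x,y,a_1,\ldots,a_k\}$ then induces an edge $xy$ together with $k$ vertices $a_1,\ldots,a_k$ that are isolated in the induced subgraph: $xa_i\notin E(G)$ by the choice of the $a_i$, while $ya_i\notin E(G)$ and $a_ia_j\notin E(G)$ because $\{y,a_1,\ldots,a_k\}\subseteq A$ is independent. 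This again produces an induced $P_2\cup kP_1$, a contradiction, so $|A\setminus N(x)|\leq k-1$ and equivalently $|N(x)\cap A|\geq |A|-k+1$.

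Neither direction presents any real obstacle; the only points that need a moment of care are (a) verifying the vertices displayed are pairwise distinct (handled by the observation that an element of $N(x)$ cannot coincide with an element outside $N(x)$, and that $a\notin B$, $b\notin A$), and (b) making sure that the forbidden subgraph is \emph{induced}, which is automatic here because the non-edges required are all witnessed by membership in a common independent set ($A$, $B$, or $A\cap B$).
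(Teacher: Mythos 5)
Your proof is correct and follows essentially the same strategy as the paper: part (i) is argued identically, and for part (ii) the paper merely packages your direct construction as a reduction to (i) by applying it to $B=(A\setminus N(x))\cup\{x\}$, which unwinds to exactly the induced $P_2\cup kP_1$ on $\{x,y,a_1,\ldots,a_k\}$ that you exhibit. No gaps.
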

\begin{proof}
For (i), by way of contradiction, assume that $A\cup B$ is not an independent set of $G$. Then there exist $x\in A\setminus B$ and $y\in B\setminus A$ such that $xy\in E(G)$. As $|A\cap B|\geq k$, there is a subset $S$ of $A\cap B$ with $|S|=k$. Then $G[\{x,y\}\cup S]\cong P_2\cup kP_1$, a contradiction. Hence, (i) is true.

For (ii), by way of contradiction, assume that (ii) is false. Then there exists $x\in N(A)$ such that $|N(x)\cap A|\leq |A|-k$. Set $B=(A-N(x))\cup \{x\}$. Then, $B$ is an independent set of $G$ such that $|A\cap B|=|A-N(x)|=|A|-|N(x)\cap A|\geq k$. By (i), we can deduce that $A\cup B$ is an independent set of $G$. This implies $x\notin N(A)$, a contradiction. Hence, (ii) is true.
\end{proof}
\smallskip

\begin{lem}
\label{lem-2}
Let $G$ be a $k$-connected $(P_2\cup kP_1)$-free graph. Let $C$ be a longest cycle of $G$, and let $H$ be a component of $G-V(C)$. Then, $|V(H)|=1$ and $N_C^-(H)\cup \{u_0\}$ is an essential independent set of $G$ with center $u_0$, where $u_0\in V(H)$.
\end{lem}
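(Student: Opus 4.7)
The plan is to combine standard longest-cycle exchange arguments with the forbidden induced subgraph $P_2 \cup kP_1$. First I would record three structural consequences of the maximality of $C$; then, assuming $|V(H)|\geq 2$, I would use an edge inside $H$ together with $k$ vertices of $N_C^-(H)$ to exhibit an induced $P_2\cup kP_1$, contradicting the hypothesis. This forces $|V(H)|=1$, and the essential-independent-set conclusion then follows immediately.

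The three maximality consequences are: \emph{(a)} no two vertices of $N_C(H)$ are consecutive on $C$, since a path $P$ in $H$ between a neighbor of $x$ and a neighbor of $x^+$ would let us replace the edge $xx^+$ and lengthen $C$; \emph{(b)} for every $u\in V(H)$ and $x\in N_C(H)$, $ux^-\notin E(G)$, since otherwise $x^-\in N_C(H)$, contradicting~(a); and \emph{(c)} $N_C^-(H)$ is an independent set of $G$---for distinct $x,y\in N_C(H)$ (necessarily non-consecutive by~(a)), an edge $x^-y^-$ combined with any $H$-path from a neighbor of $x$ to a neighbor of $y$ would close up into the longer cycle going $x \to (H\text{-path}) \to y \to y\overrightarrow{C}x^- \to y^- \to y^-\overleftarrow{C}x^+ \to x$, whose vertex set is $V(C)$ together with the interior of the $H$-path.

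Finally, $k$-connectivity forces $|N_C(H)|\geq k$: either $N_C(H)$ is a vertex cut separating $V(H)$ from the rest of $V(G)$, or $V(G)=V(H)\cup N_C(H)$ and then any vertex of $H$ has at least $\delta(G)\geq k$ neighbors in $N_C(H)$. Suppose $|V(H)|\geq 2$; since $H$ is connected it contains an edge $u_0u_1$, and choosing any $k$ distinct vertices $x_1,\ldots,x_k\in N_C(H)$, properties~(b) and~(c) make $G[\{u_0,u_1,x_1^-,\ldots,x_k^-\}]\cong P_2\cup kP_1$, contradicting the hypothesis. Hence $V(H)=\{u_0\}$; then every $x\in N_C(H)$ is a neighbor of $u_0$, so $u_0\,x\,x^-$ is a length-two path in $G$, and combined with~(b) this gives $dist_G(u_0,x^-)=2$. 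Together with~(b) and~(c), this shows that $N_C^-(H)\cup\{u_0\}$ is an essential independent set of $G$ with center $u_0$. The main bookkeeping is in~(c), verifying that the two $C$-arcs $y\overrightarrow{C}x^-$ and $y^-\overleftarrow{C}x^+$ are disjoint and together with $\{x\}$ cover $V(C)$; this is routine once~(a) excludes the consecutive case.
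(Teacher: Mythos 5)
Your proof is correct and follows essentially the same route as the paper's: longest-cycle exchange arguments show that $N_C^-(H)\cup\{u\}$ is independent for every $u\in V(H)$, and an edge of $H$ together with the predecessors of $k$ attachment vertices then yields an induced $P_2\cup kP_1$, forcing $|V(H)|=1$, after which the distance-two property is immediate. One cosmetic remark: in your connectivity step the case $V(G)=V(H)\cup N_C(H)$ is handled loosely (a vertex of $H$ may have neighbours inside $H$, so it need not have $k$ neighbours in $N_C(H)$), but that case is vacuous anyway, since it would force $V(C)=N_C(H)$ and hence put two consecutive vertices of $C$ in $N_C(H)$, contradicting your item (a).
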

\begin{proof} For convenience, we let $x^+:=x_C^+$ for each $x\in V(C)$.
First, we claim that
\begin{equation}\label{eqn-1}
\text{$N_C^-(H)\cup \{u_0\}$ is an independent set for any $u_0\in V(H)$.}
\end{equation}
By way of contradiction, assume that $N_C^-(H)\cup \{u_0\}$ is not an independent set for some $u_0\in V(H)$. Then there exist $x,y\in N_C^-(H)\cup \{u_0\}$ such that $xy\in E(G)$. If $\{x,y\}\cap \{u_0\}=\emptyset$, then $x,y\in N_C^-(H)$ and so $x^+,y^+\in N_C(H)$. Let $P$ be a longest $(x^+,y^+)$-path in $G[V(H)\cup\{x^+,y^+\}]$. Then,
$$
y^+\overrightarrow{C}xy\overleftarrow{C}x^+\overrightarrow{P}y^+
$$
is a cycle longer than $C$, a contradiction. Hence, $\{x,y\}\cap \{u_0\}\neq \emptyset$. By renaming $x,y$ (if necessary), we may assume that $x=u_0$ and $y\in N_C^-(H)$. Then, $y^+\in N_C(H)$. Let $P'$ be a longest $(u_0,y^+)$-path in $G[V(H)\cup\{y^+\}]$. Then,  $yu_0\overrightarrow{P'}y^+\overrightarrow{C}y$
is a cycle longer than $C$, a contradiction. Hence, \eqref{eqn-1} is true.

Now, we claim that
\begin{equation}\label{eqn-2}
|V(H)|=1.
\end{equation}
By way of contradiction, assume that \eqref{eqn-2} is false. As $H$ is a component of $G-V(C)$, there exists an edge $uv\in E(H)$. As $G$ is $k$-connected, $|N^-_C(H)|\geq k$. Let $S$ be a $k$-subset of $N^-_C(H)$. By applying \eqref{eqn-1} with $u_0=u$ and $v$ respectively, we can deduce that both $N^-_C(H)\cup\{u\}$ and $N^-_C(H)\cup\{v\}$ are independent sets of $G$. It follows that
$G[\{u,v\}\cup S]\cong P_2\cup kP_1$, a contradiction. Hence, \eqref{eqn-2} is true.

It follows from \eqref{eqn-2} that $N_C(H)=N(u_0)$,  where $u_0$ is the unique vertex of $H$.  Together with \eqref{eqn-1}, we see that $dist_G(u_0, x)=2$ for all $x\in N_C^-(H)$, and hence $N_C^-(H)\cup \{u_0\}$ is an essential independent set of $G$ with center  $u_0$. Therefore, Lemma \ref{lem-2} is  true.
\end{proof}

\section{Proof of Theorem \ref{thm-main}}
Let $G$ be a $1$-tough and $k$-connected $(P_2\cup kP_1)$-free graph with $\mu_{k+1}(G)\geq\frac{7k-6}{5}$.  To prove Theorem \ref{thm-main}, we show that if $G$ is not hamiltonian, then it is the Petersen graph.

Since $G$ is not hamiltonian, $V(G)-V(C)\neq\emptyset$ holds for
any cycle $C$ in $G$.   Pick a longest cycle $C$ and a vertex $u_0\in V(G)-V(C)$ such that $d_G(u_0)$ is as large as possible.

Set $N_C(u_0):=\{x_1,x_2,\ldots,x_m\}$, where the vertices $x_1,x_2,\ldots,x_m$ appear in this order along $C$. By Lemma \ref{lem-2}, $V(G)-V(C)$ is an independent set in $G$, and hence $m=d_G(u_0)$.
Note that $m\geq k$ as $G$ is $k$-connected. For simplicity, we denote $X:=N_C(u_0)$, $X^-:=X_C^-$ and $X^+:=X_C^+$. For each $x\in V(C)$, let $x^-:=x_C^-$ and $x^{-2}:=(x^-)^-$. We break the proof of Theorem \ref{thm-main} into a series of claims, and prove them one by one.

\begin{claim}\label{cla-3.1}
$m\geq \mu_{k+1}(G)$.
\end{claim}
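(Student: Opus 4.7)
My goal is to produce a single $(k+1)$-essential independent set whose maximum vertex degree is at most $m$; by the defining minimum in $\mu_{k+1}(G)$ this will force $\mu_{k+1}(G)\le m$.

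First I would apply Lemma~\ref{lem-2} to the singleton component $H=\{u_0\}$ of $G-V(C)$: it gives that $\{u_0\}\cup X^-$ is an essential independent set of $G$ of size $m+1$ with center $u_0$, so in particular every vertex of $X^-$ lies at distance exactly $2$ from $u_0$. Since $G$ is $k$-connected, $m=d_G(u_0)\ge\kappa(G)\ge k$, so $|X^-|=m\ge k$ and the construction below makes sense.

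Next, for any $k$-subset $S'\subseteq X^-$, the set $S:=\{u_0\}\cup S'$ inherits $u_0$ as a valid center and therefore lies in $\mathcal{S}_{k+1}$. By the definition of $\mu_{k+1}(G)$ as a minimum,
$$
\mu_{k+1}(G)\ \le\ \max_{v\in S}d_G(v)\ =\ \max\bigl(m,\ \max_{v\in S'}d_G(v)\bigr).
$$
Taking $S'$ to be the $k$ vertices of $X^-$ of smallest degree then reduces the claim to the statement that at least $k$ vertices of $X^-$ have degree at most $m$.

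The main obstacle is precisely to verify this last reduction. Suppose for contradiction that at least $m-k+1$ vertices $y\in X^-$ satisfy $d_G(y)>m=d_G(u_0)$. Because $u_0$ was chosen to maximize $d_G$ over $V(G)\setminus V(C)$, such a $y$ must draw most of its neighbors from inside $V(C)$ itself; any non-cycle neighbor $w$ of $y$ would satisfy $d_G(w)\le m$ and could not save $y$'s degree. I would then aim to derive a contradiction by combining the longest-cycle property of $C$ (via a cycle-extension argument in the spirit of the proof of Lemma~\ref{lem-2}, splicing edges of $y$ into $C$ through the hub vertex $u_0$) with Lemma~\ref{lem-1}(ii) applied to the independent set $\{u_0\}\cup X^-$, which forces any outside vertex adjacent to this set to hit it in all but $k-1$ places, producing either a longer cycle or a forbidden induced $P_2\cup kP_1$. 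This combinatorial step is where I expect the real work of the claim to lie.
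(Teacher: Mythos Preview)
Your setup is fine, but there is a genuine gap at the step you flag as ``where the real work lies,'' and your sketch there misidentifies the mechanism.

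The contradiction does not come from analyzing the edges of the high-degree vertex $y=x_i^-$ at all, nor from the hypothesis that \emph{many} vertices of $X^-$ have degree exceeding $m$. The paper simply fixes any $k$-subset (say $\{x_1^-,\dots,x_k^-\}$), assumes $m<\mu_{k+1}(G)$, and deduces that a \emph{single} $x_i^-$ satisfies $d_G(x_i^-)>m=d_G(u_0)$. For that one index $i$ it then builds a new longest cycle $C'$ with $V(C')=(V(C)\setminus\{x_i^-\})\cup\{u_0\}$, so that $x_i^-$ is now off-cycle; the extremal choice of $(C,u_0)$ then forces $d_G(u_0)\ge d_G(x_i^-)$, a contradiction. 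The active vertex in this construction is $x_i^{-2}$, not $x_i^-$: either $x_i^{-2}\in N(u_0)$, in which case $C':=x_i\overrightarrow{C}x_i^{-2}u_0x_i$ works directly, or else $x_i^{-2}\notin N(u_0)$ and Lemma~\ref{lem-1}(ii) applied with $(A,x)=(X^-\cup\{u_0\},\,x_i^{-2})$ gives $|N(x_i^{-2})\cap X^-|\ge m-k+2\ge2$, so some $x_j^-$ with $j\ne i$ is adjacent to $x_i^{-2}$, and $C'':=x_i\overrightarrow{C}x_j^-x_i^{-2}\overleftarrow{C}x_ju_0x_i$ is the desired cycle.

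So: your detour through ``at least $m-k+1$ vertices of $X^-$ have high degree'' is unnecessary, your remark that such a $y$ ``must draw most of its neighbors from inside $V(C)$'' leads nowhere, and the phrase ``splicing edges of $y$ into $C$'' points to the wrong vertex. The missing idea is the swap $u_0\leftrightarrow x_i^-$ via the neighbor structure of $x_i^{-2}$.
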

\begin{proof}
By Lemma \ref{lem-2}, $X^-\cup \{u_0\}$ is an essential independent set of $G$ with center $u_0$. As $m\geq k$, $\{u_0,x_1^-,\ldots,x_k^-\}$ is an essential independent set of $G$.
By the definition of $\mu_{k+1}(G)$, we have
\begin{equation}\label{eqn-3}
\max\{d_G(u_0), d_G(x_1^-), \ldots, d_G(x_k^-)\}\geq \mu_{k+1}(G).
\end{equation}
By way of contradiction, assume that Claim \ref{cla-3.1} is false. Then, $d_G(u_0)=m< \mu_{k+1}(G)$. By \eqref{eqn-3}, there exists some integer $i\in [1,k]$ such that
\begin{equation}\label{eqn-4}
d_G(x_i^-)\geq \mu_{k+1}(G)>d_G(u_0).
\end{equation}
If $x_i^{-2}\in N(u_0)$, then $C':=x_i\overrightarrow{C}x_{i}^{-2}u_0x_i$ is a longest cycle in $G$. Note that $x_i^-\in V(G)-V(C')$.   By the choice of $(C,u_0)$, we have $d_G(u_0)\geq d_G(x_i^-)$, contrary to \eqref{eqn-4}. Hence, $x_i^{-2}\notin N(u_0)$.
As $x_i^{-2}x_i^-\in E(G)$, $x_i^{-2}\in N(X^-\cup\{u_0\})$. By applying Lemma \ref{lem-1} (ii) with $(A,x):=(X^-\cup\{u_0\}, x_i^{-2})$, we can derive that
$$
|N(x_i^{-2})\cap (X^-\cup\{u_0\})|\geq |X^-\cup\{u_0\}|-k+1= m-k+2\geq 2.
$$
As $x_i^{-2}\notin N(u_0)$, there exists some integer $j\in [1,m]$ with $j\neq i$ such that $x_i^{-2}x_j^-\in E(G)$. Let $C'':=x_i\overrightarrow{C}x_j^-x_i^{-2}\overleftarrow{C}x_ju_0x_i$. Then $C''$ is a longest cycle in $G$. Note that $x_i^-\in V(G)-V(C'')$. By the choice of $(C,u_0)$, we have $d_G(u_0)\geq d_G(x_i^-)$, contrary to \eqref{eqn-4}. Hence, Claim \ref{cla-3.1} is true.
\end{proof}
\smallskip

\begin{claim}
\label{cla-3.2}
For each $v\in V(G)-V(C)$, the following statements are true:
\begin{enumerate}
  \item[\emph{(i)}] $X^-\cup\{v\}$ is an  independent set in $G$;
  \item[\emph{(ii)}] $X^+\cup\{v\}$ is an  independent set in $G$.
\end{enumerate}
\end{claim}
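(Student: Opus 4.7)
The plan is to prove (i) by contradiction and obtain (ii) by reversing the chosen orientation of $C$, since doing so swaps $X^-$ with $X^+$ while preserving both the longest-cycle property of $C$ and the extremal choice of $u_0$ as a maximum-degree off-cycle vertex. For (i), if $v=u_0$ the claim is immediate from Lemma 2.2, so I would suppose $v\neq u_0$ and, toward a contradiction, that $vx_i^-\in E(G)$ for some $i\in[1,m]$.

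Two consequences of Lemma 2.2 would feed into the argument: every component of $G-V(C)$ is a singleton, so $V(G)-V(C)$ is an independent set and in particular $vu_0\notin E(G)$; and $A:=X^-\cup\{u_0\}$ is an independent set of size $m+1$. Since $v\in N(A)$, Lemma 2.1(ii) gives
$$|N(v)\cap A|\ \geq\ |A|-k+1\ =\ m-k+2\ \geq\ 2,$$
using $m\geq k$ from the $k$-connectedness of $G$. Combined with $vu_0\notin E(G)$, this produces two distinct indices (which we may relabel as $i<j$) satisfying $vx_i^-,\ vx_j^-\in E(G)$.

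With two such attachments of $v$ to $X^-$ in hand, I would close the argument by a P\'osa-style two-arc swap: consider
$$C^*\ :=\ u_0\,x_i\overrightarrow{C}x_j^-\,v\,x_i^-\overleftarrow{C}x_j\,u_0.$$
The edges $u_0x_i,u_0x_j$ lie in $E(G)$ since $x_i,x_j\in X$; the edges $vx_j^-,vx_i^-$ are present by choice; and the two $C$-arcs $x_i\overrightarrow{C}x_j^-$ and $x_j\overrightarrow{C}x_i^-$ partition $V(C)$, using the cyclic order $i<j$ together with $X\cap X^-=\emptyset$ (another consequence of Lemma 2.2, since $u_0$ is adjacent to every vertex of $X$ but to none of $X^-$) to rule out the degenerate case $x_j=x_i^-$. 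Hence $|V(C^*)|=|V(C)|+2$, contradicting the maximality of $C$.

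The main point (and only mild obstacle) is selecting the right cycle: the $(P_2\cup kP_1)$-free hypothesis, via Lemma 2.1(ii), is essential because it forces $v$ to have at least two neighbors in $X^-$ (not just one), which is exactly what the two-arc swap needs; the remaining cyclic bookkeeping for the partition is routine.
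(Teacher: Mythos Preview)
Your proof is correct and follows essentially the same approach as the paper's: both reduce to the case $v\neq u_0$, use Lemma~2.2 to get $u_0v\notin E(G)$ and the independence of $X^-\cup\{u_0\}$, apply Lemma~2.1(ii) to force at least two neighbors of $v$ in $X^-$, and then exhibit the same longer cycle $u_0x_i\overrightarrow{C}x_j^-vx_i^-\overleftarrow{C}x_ju_0$; part (ii) is handled by reversing the orientation of $C$ in both arguments. Your extra remark ruling out $x_j=x_i^-$ via $X\cap X^-=\emptyset$ is harmless but not actually needed for the cycle construction to go through.
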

\begin{proof}  For (i), suppose to the contrary that  there exists  $v\in V(G)-V(C)$ such that $X^-\cup\{v\}$ is not an independent set in $G$. Then, $v\neq u_0$ and $v\in N(X^-)$, because according to Lemma \ref{lem-2}, $X^-\cup\{u_0\}$ is an independent set in $G$. By applying Lemma \ref{lem-1} (ii) with $(A,x):=(X^-\cup \{u_0\},v)$, we can derive that
\begin{equation}\label{eqn-5}
|N(v)\cap (X^-\cup \{u_0\})|\geq (m+1)-k+1\geq 2.
\end{equation}
However, by Lemma \ref{lem-2}, each component of $G-V(C)$ has order one. Hence,
$u_0v\notin E(G)$. This together with \eqref{eqn-5} implies that $|N(v)\cap X^-|\geq 2$, and hence $x_i^-, x_j^-\in N(v)$ holds for some $i,j\in [1,m]$ with $i\neq j$.
It follows that
$$
x_i\overrightarrow{C}x_j^{-}vx_i^-\overleftarrow{C}x_ju_0x_i
$$
is a cycle longer than $C$, a contradiction. Hence, (i) is true.
By considering $\overleftarrow{C}$, we see that (ii) is true. The proof is now complete.
\end{proof}
\smallskip

\begin{claim}\label{cla-3.3}
For each $xy\in E(C)$, the following statements are true:
\begin{enumerate}
  \item[\emph{(i)}] $N(X^-)\cap\{x,y\}\neq\emptyset$;
  \item[\emph{(ii)}] $N(X^+)\cap\{x,y\}\neq\emptyset$.
\end{enumerate}
\end{claim}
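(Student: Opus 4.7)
The plan is to prove (i) by contradiction and then deduce (ii) by symmetry via $\overleftarrow{C}$. Suppose $xy \in E(C)$ satisfies $N(X^-) \cap \{x, y\} = \emptyset$. The goal is to exhibit an induced $P_2 \cup kP_1$ in $G$, which would contradict the $(P_2 \cup kP_1)$-freeness.

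The first step is to observe that the contradiction hypothesis forces $\{x, y\} \cap X^- = \emptyset$. Indeed, if $x \in X^-$, say $x = x_i^-$, then $y = x^+ = x_i$ would be adjacent to $x \in X^-$, placing $y$ in $N(X^-)$, contrary to the assumption. Similarly, if $y \in X^-$, then $x = y^-$ is adjacent to $y \in X^-$, again a contradiction. So $X^-$ is disjoint from $\{x, y\}$.

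Since $m \geq k$ (by the $k$-connectivity of $G$) and $|X^-| = m$, we may select any $k$-subset $S \subseteq X^-$. Then $S$ is independent (as a subset of $X^-$, which is independent by Lemma~\ref{lem-2}), $xy$ is an edge, and by the contradiction hypothesis there are no edges between $\{x, y\}$ and $S \subseteq X^-$. The $k+2$ vertices $\{x, y\} \cup S$ are distinct because $\{x, y\} \cap X^- = \emptyset$, so $G[\{x, y\} \cup S] \cong P_2 \cup kP_1$, contradicting that $G$ is $(P_2 \cup kP_1)$-free. This proves (i).

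For (ii), simply reverse the orientation of $C$. Under the direction $\overleftarrow{C}$, the successor operator becomes the predecessor operator, so the set $X^+$ (with respect to $C$) coincides with $X^-$ (with respect to $\overleftarrow{C}$). The conclusions of Lemma~\ref{lem-2} and Claim~\ref{cla-3.2} are invariant under this orientation reversal, so applying (i) to $\overleftarrow{C}$ yields $N(X^+) \cap \{x, y\} \neq \emptyset$. There is no real obstacle here: the argument is a direct use of the forbidden-subgraph condition, and the only subtlety is verifying that the hypothesis $\{x,y\} \cap N(X^-) = \emptyset$ already prevents $x$ or $y$ from lying in $X^-$, so that $S$ can be chosen disjoint from $\{x,y\}$.
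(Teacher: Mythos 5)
Your proposal is correct and follows essentially the same route as the paper: assume $N(X^-)\cap\{x,y\}=\emptyset$, deduce $\{x,y\}\cap X^-=\emptyset$, take a $k$-subset of the independent set $X^-$ together with the edge $xy$ to produce an induced $P_2\cup kP_1$, and obtain (ii) by reversing the orientation of $C$. The extra details you verify (independence of $S$ via Lemma~\ref{lem-2} and distinctness of the $k+2$ vertices) are exactly the implicit steps in the paper's argument.
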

\begin{proof}
For (i), assume to the contrary that $N(X^-)\cap\{x,y\}=\emptyset$ for some  $xy\in E(C)$. If $x\in X^-$, then $y\in N(X^-)$, a contradiction. Hence, $x\notin X^-$. Similarly, $y\notin X^-$. It follows that $G[\{x,y\}\cup\{x_1^-,\ldots,x_k^-\}]\cong P_2\cup kP_1$, a contradiction. Therefore, (i) is true. By considering $\overleftarrow{C}$, we see that (ii) is true. The proof is now complete.
\end{proof}
\smallskip

\begin{claim}\label{cla-3.4}
There exists an edge $uv\in E(C)$ such that $u, v\in N(X^-)$.
\end{claim}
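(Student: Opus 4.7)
The plan is to argue by contradiction, assuming that no edge of $C$ has both endpoints in $N(X^-)$. Combined with Claim \ref{cla-3.3}(i), which says every edge of $C$ has at least one endpoint in $N(X^-)$, this forces each edge of $C$ to have \emph{exactly} one endpoint in $N(X^-)$. Setting $A := V(C) \cap N(X^-)$ and $B := V(C) \setminus A$, the sets $A$ and $B$ must therefore alternate along $C$, so $|V(C)| = 2\ell$ is even with $|A| = |B| = \ell$. Since $X^-$ is independent by Lemma \ref{lem-2}, $X^- \cap N(X^-) = \emptyset$, and hence $X^- \subseteq B$.

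The heart of the plan is to show that $G - A$ has no edges at all. Lemma \ref{lem-2} gives that $V(G) \setminus V(C)$ is an independent set, and Claim \ref{cla-3.2}(i) says no vertex of $V(G) \setminus V(C)$ has a neighbor in $X^-$. Hence the only candidate edges inside $G - A$ are chords of $G$ lying in $B - X^-$ or edges joining $B - X^-$ to $V(G) \setminus V(C)$. For any such would-be edge $uv$, pick a $k$-subset $S \subseteq X^-$, possible since $m \geq k$; both endpoints of $uv$ lie outside $X^- \cup N(X^-)$ and so have no neighbor in $S$, while $S$ is independent, so $G[\{u,v\} \cup S] \cong P_2 \cup k P_1$, contradicting $(P_2 \cup k P_1)$-freeness. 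Therefore every vertex of $V(G) \setminus A$ is isolated in $G - A$.

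With $n := |V(G)|$ this gives $\omega(G-A) = n - |A| = n - \ell$, and $A$ is indeed a vertex cut since $G - A$ contains the $m + 1 \geq 3$ isolated vertices of $X^- \cup \{u_0\}$ while $u_0 \notin A$. The 1-toughness of $G$ then yields $\ell = |A| \geq \omega(G - A) = n - \ell$, so $n \leq 2\ell = |V(C)|$, contradicting $u_0 \in V(G) \setminus V(C)$. The step requiring the most care is the middle paragraph's case analysis: one needs Claim \ref{cla-3.2}(i) to eliminate all edges from $X^-$ out to $V(G) \setminus V(C)$, and must combine a suitable $k$-subset of $X^-$ with $(P_2 \cup k P_1)$-freeness to kill chord edges within $B - X^-$ and edges between $B - X^-$ and $V(G) \setminus V(C)$; once this structural fact is in hand, the toughness bookkeeping is immediate.
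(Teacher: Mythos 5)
Your proposal is correct and follows essentially the same route as the paper: negate the claim, use Claim \ref{cla-3.3} to get strict alternation along $C$, show that $V(G)\setminus N(X^-)$ induces no edges via $(P_2\cup kP_1)$-freeness, and then contradict $1$-toughness by counting components of $G-N(X^-)$. The only cosmetic differences are that you build the forbidden $P_2\cup kP_1$ directly from a $k$-subset of $X^-$ where the paper invokes Lemma \ref{lem-1}(i) (which encapsulates the same construction), and you establish independence first and toughness second rather than the reverse.
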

\begin{proof}
By way of contradiction, assume that Claim \ref{cla-3.4} is false. Then $|N(X^-)\cap\{u,v\}|\leq 1$ for all $uv\in E(C)$. Together with Claim \ref{cla-3.3}, we see that
$|N(X^-)\cap\{u,v\}|= 1$ holds for all $uv\in E(C)$.
Then, along $C$, the vertices on $C$ alternate between vertices in $N(X^-)$ and vertices in $V(G)\setminus N(X^-)$. Thus, we have
\begin{equation}\label{eqn-6}
|V(C)\cap N(X^-)|=|V(C)\setminus N(X^-)|.
\end{equation}
By Claim \ref{cla-3.2}, for each $v\in V(G)-V(C)$, we have $v\notin N(X^-)$, and hence
$N(X^-)\subseteq V(C)$.
This together with \eqref{eqn-6} implies that
$$
|N(X^-)|=|V(C)\cap N(X^-)|=|V(C)\setminus N(X^-)|
$$
If $V(G)-N(X^-)$ is an independent set of $G$, then $G-N(X^-)$ has exactly $|V(G)-N(X^-)|$ components. Thus,
$$
\omega(G-N(X^-))=|V(G)-N(X^-)|>|V(C)\setminus N(X^-)|=|N(X^-)|
$$
which implies that $G$ is not $1$-tough, a contradiction. Therefore, $V(G)-N(X^-)$ is not an independent set of $G$. Let $x,y$ be two vertices of $V(G)-N(X^-)$ such that $xy\in E(G)$. Set $A=X^-\cup\{x\}$ and $B=X^-\cup\{y\}$. Note that $x,y\in (V(G)-V(C))\cup (V(C)-N(X^-))$. By Claim \ref{cla-3.2}, we can derive that both $A$ and $B$ are independent sets of $G$. Note that $|A\cap B|\geq|X^-|\geq k$. By Lemma \ref{lem-1} (i), $A\cup B$ is an independent set of $G$, and hence $xy\notin E(G)$, a contradiction. Therefore, Claim \ref{cla-3.4} is true.
\end{proof}
\smallskip

By Claim \ref{cla-3.4}, there exists an edge $uv$ of $C$ such that $u,v\in N(X^-)$.
By symmetry, we may assume that $u=v^-$ and $uv\in E(x_m\overrightarrow{C}x_1)$. If $v=x_1$, then $u=x_1^-\notin N(X^-)$, a contradiction. Hence, $v\neq x_1$. Similarly, by $v\in N(X^-)$, we have $v\neq x_1^-$.
It follows that $|V(x_m\overrightarrow{C}x_1)|\geq 4$ and
\begin{equation}\label{eqn-7}
uv\in E(x_m\overrightarrow{C}x_1^{-2}),
\end{equation}
and hence $v\notin N(u_0)$.

\smallskip

\begin{claim}\label{cla-3-5}
For $z\in\{u,v\}\cup\{x_1^{-2},x_2^{-2},\ldots,x_m^{-2}\}$,
$$
|N(z)\cap X^-|\geq m-k+2-|N(u_0)\cap\{z\}|.
$$
\end{claim}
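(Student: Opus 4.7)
The plan is to prove Claim 3.5 as a direct application of Lemma 2.1(ii) to the set $A = X^-\cup\{u_0\}$. By Lemma 2.2, $X^-\cup\{u_0\}$ is an (essential) independent set of $G$, and since $u_0\notin V(C)\supseteq X^-$, we have $|X^-\cup\{u_0\}| = m+1$. Therefore, once we know a vertex $z$ lies in $N(X^-\cup\{u_0\})$, Lemma 2.1(ii) yields
\[
|N(z)\cap (X^-\cup\{u_0\})|\;\geq\; (m+1)-k+1 \;=\; m-k+2.
\]
Separating the at most one contribution from $u_0$ then gives
\[
|N(z)\cap X^-|\;\geq\; m-k+2 - |N(z)\cap\{u_0\}| \;=\; m-k+2 - |N(u_0)\cap\{z\}|,
\]
which is exactly the desired bound.

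Thus the only thing to check is that each listed candidate $z$ really does belong to $N(X^-\cup\{u_0\})$. For $z\in\{u,v\}$ this is immediate from Claim 3.4, which was chosen precisely so that $u,v\in N(X^-)\subseteq N(X^-\cup\{u_0\})$. For $z = x_i^{-2}$ with $i\in[1,m]$, the consecutive edge $x_i^{-2}x_i^-\in E(C)\subseteq E(G)$ shows $x_i^{-2}\in N(X^-)\subseteq N(X^-\cup\{u_0\})$. In all cases, $z\in V(C)$ so $z\neq u_0$, meaning $|N(z)\cap\{u_0\}| = |N(u_0)\cap\{z\}|\in\{0,1\}$, consistent with the statement.

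There is no real obstacle here: the claim is essentially a repackaging of Lemma 2.1(ii) together with the observation (already extracted from Claim 3.4 and the cycle structure) that every vertex $z$ under consideration has at least one neighbor in $X^-\cup\{u_0\}$. The only minor care needed is to remember that when $z\in X^-$ (which can happen for $z = x_i^{-2}$ if two members of $X$ happen to be consecutive on $C$), the inequality from Lemma 2.1(ii) still counts only genuine neighbors of $z$, so no self-loop correction is required.
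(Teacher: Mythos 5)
Your proposal is correct and follows essentially the same route as the paper: both apply Lemma \ref{lem-1}(ii) with $A=X^-\cup\{u_0\}$ (independent by Lemma \ref{lem-2}, of size $m+1$) after noting $z\in N(X^-)$, and then subtract the at most one neighbor contributed by $u_0$. Your extra remarks (why each $z$ lies in $N(X^-)$, and the non-issue of $z\in X^-$) only make explicit what the paper leaves implicit.
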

\begin{proof}
Let $z\in\{u,v\}\cup\{x_1^{-2},x_2^{-2},\ldots,x_m^{-2}\}$, then $z\in N(X^-)$. It follows from Lemma \ref{lem-2} that $X^-\cup\{u_0\}$ is an independent set of $G$.  By applying Lemma \ref{lem-1} (ii) with $(A,x):=(X^-\cup\{u_0\},z)$, we get
$$
|N(z)\cap (X^-\cup \{u_0\})|\geq |X^-\cup \{u_0\}|-k+1=m-k+2,
$$
and hence
\begin{eqnarray*}
|N(z)\cap X^-|&=&|N(z)\cap (X^-\cup \{u_0\})|-|N(z)\cap\{u_0\}|\\
&\geq& m-k+2-|N(u_0)\cap\{z\}|.
\end{eqnarray*}
Therefore, Claim \ref{cla-3-5} is true.
\end{proof}
\smallskip

Set $p:=|N(v)\cap X^-|$ and $q:=\min\{i\in [1,m]:~ x_i^-\in N(u)\}$.
Recalling that $v\notin N(u_0)$, we have $|N(u_0)\cap\{v\}|=0$.
By using Claim \ref{cla-3-5} with $z:=v$, we can derive that
\begin{equation}\label{eqn-8}
p=|N(v)\cap X^-|\geq m-k+2-|N(u_0)\cap\{v\}|=m-k+2.
\end{equation}
Similarly, by using Claim \ref{cla-3-5} with $z=u$, we obtain that
\begin{equation}\label{eqn-9}
|[q,m]|\geq |N(u)\cap X^-|\geq m-k+2-|N(u_0)\cap\{u\}|\geq m-k+1.
\end{equation}
Let $x_{i_1}^-,x_{i_2}^-,\ldots,x_{i_p}^-$ be all neighbors of $v$ on $X^-$ such that $1\leq i_1<i_2<\cdots<i_p\leq m$.

\smallskip

\begin{claim}\label{cla-3-6}
$i_p<q$.
\end{claim}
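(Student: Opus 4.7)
The plan is to argue by contradiction: assume $i_p\geq q$ and construct a cycle in $G$ strictly longer than $C$, contradicting the maximality of $C$. The chords at our disposal are $u x_q^-$ (existing by the definition of $q$), $v x_{i_p}^-$ (existing by the definition of $i_p$), the cycle edge $uv$, and the edges $u_0 x_j$ for every $x_j\in X=N(u_0)$.

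In the principal subcase $i_p>q$, the four vertices $x_q^-, x_q, x_{i_p}^-, x_{i_p}$ are distinct and occur in this cyclic order on $C$. I would exhibit the cycle
\[
C^\ast := u_0\, x_q\, \overrightarrow{C}\, x_{i_p}^-\, v\, \overrightarrow{C}\, x_q^-\, u\, \overleftarrow{C}\, x_{i_p}\, u_0,
\]
where the middle arc $v\overrightarrow{C}x_q^-$ wraps around $C$ through $x_1$. The three subpaths of $C$ used here, namely $x_q\overrightarrow{C}x_{i_p}^-$, $v\overrightarrow{C}x_q^-$, and $x_{i_p}\overrightarrow{C}u$, are pairwise internally disjoint and together exhaust $V(C)$, as a direct inspection of the cyclic order along $C$ confirms. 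Hence $V(C^\ast) = V(C)\cup\{u_0\}$, so $|V(C^\ast)| = |V(C)|+1$, contradicting the choice of $C$ as a longest cycle.

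The main obstacle is the boundary subcase $i_p = q$, in which $x_q^- = x_{i_p}^-$ is adjacent to both $u$ and $v$ (so $\{u,v,x_q^-\}$ forms a triangle) and the subpath $x_q\overrightarrow{C}x_{i_p}^-$ above degenerates to all of $C$. To treat it I would invoke the bound $p \geq m-k+2 \geq 2$ from~\eqref{eqn-8}, which produces a second neighbor $x_{i_{p-1}}^-$ of $v$ in $X^-$ with $i_{p-1}<q$. Using $v x_{i_{p-1}}^-$ as an auxiliary chord alongside $u x_q^-$, $uv$, and the edges $u_0 x_q$ and $u_0 x_{i_{p-1}}$, I would again re-route $C$ around $u_0$: remove the $C$-edges $uv$, $x_q^- x_q$, and $x_{i_{p-1}}^- x_{i_{p-1}}$ from $C$, and reconnect the resulting pieces through the available chords and through $u_0$ so that every vertex of $V(C)\cup\{u_0\}$ is visited exactly once. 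Verifying that this combinatorial re-routing really produces a simple cycle covering $V(C)\cup\{u_0\}$ is the most delicate technical check, and once it is established the desired contradiction with $C$ being a longest cycle follows.
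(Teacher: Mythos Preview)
Your treatment of the subcase $i_p>q$ is correct and is essentially the paper's cycle, written in a different starting point.

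The subcase $i_p=q$, however, has a genuine gap. After you delete the $C$-edges $uv$, $x_q^-x_q$, $x_{i_{p-1}}^-x_{i_{p-1}}$, the three resulting arcs are $A_1=C[v,x_{i_{p-1}}^-]$, $A_2=C[x_{i_{p-1}},x_q^-]$, $A_3=C[x_q,u]$. The chord $vx_{i_{p-1}}^-$ you single out has \emph{both} endpoints on $A_1$, so it cannot link two different pieces. Among the remaining edges you list (including the deleted $C$-edges, which you may of course reinsert), $u_0$ is adjacent only to the pieces $A_2$ and $A_3$, so any Hamiltonian cycle on $\{u_0,A_1,A_2,A_3\}$ must read $u_0\,A_2\,A_1\,A_3\,u_0$ or its reverse. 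Chasing endpoints, $u_0\to A_2$ forces entry at $x_{i_{p-1}}$ and exit at $x_q^-$; the only $x_q^-$--$A_1$ edge is $x_q^-v$, forcing exit from $A_1$ at $x_{i_{p-1}}^-$; but there is no edge from $x_{i_{p-1}}^-$ to either endpoint $x_q$ or $u$ of $A_3$. The reverse order fails symmetrically (you would need $x_q^-u_0\in E(G)$). So the re-routing you describe cannot be completed with the chords at hand; the ``delicate technical check'' actually fails.

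The paper does not use a second neighbour of $v$ at all. Instead it passes to the vertex $x_q^{-2}$: first it rules out $x_q^{-2}\in N(u_0)$ by an immediate longer-cycle argument, then invokes Claim~\ref{cla-3-5} to get $|N(x_q^{-2})\cap X^-|\ge m-k+2\ge 2$, producing some $x_i^-\in N(x_q^{-2})$ with $i\neq q$. The cycle $x_i\overrightarrow{C}x_q^{-2}x_i^-\overleftarrow{C}x_qu_0x_i$ is then Hamiltonian in $G[(V(C)\setminus\{x_q^-\})\cup\{u_0\}]$ and still contains the edge $uv$, so replacing $uv$ by $ux_q^-v$ yields a cycle longer than $C$. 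The crucial extra ingredient you are missing is a chord out of $x_q^{-2}$, not out of $v$.
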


\begin{proof}
By way of contradiction, assume that $i_p\geq q$. If $i_p>q$, then
$$
x_{i_p}\overrightarrow{C}ux_q^-\overleftarrow{C}vx_{i_p}^-\overleftarrow{C}x_qu_0x_{i_p}
$$
is a cycle longer than $C$, a contradiction. Hence, $i_p=q$, which implies that $u,v\in N(x_q^-)$.
It follows from \eqref{eqn-8} that $p\geq m-k+2\geq 2$, and hence $q=i_p\geq i_2\geq 2$. By using Claim \ref{cla-3-5} with $z:=x_q^{-2}$, we have
\begin{equation}\label{eqn-10}
|N(x_q^{-2})\cap X^-|\geq m-k+2-|N(u_0)\cap\{x_q^{-2}\}|.
\end{equation}
If $x_q^{-2}\in N(u_0)$, then $x_q^{-2}=x_{q-1}$.
Recalling that $q\geq 2$ and $v\in V(x_m^+Cx_1^{-2})$, we have $uv\in E(x_q\overrightarrow{C}x_q^{-2})$, and hence
$x_q\overrightarrow{C}ux_q^-v\overrightarrow{C}x_q^{-2}u_0x_q$ is a cycle longer than $C$, a contradiction. Thus, $x_q^{-2}\notin N(u_0)$. This together with \eqref{eqn-10} implies that
$|N(x_q^{-2})\cap X^-|\geq m-k+2\geq 2$.
Let $i$ be an integer with $i\in [1,m]\setminus\{q\}$ such that $x_i^-\in N(x_q^{-2})$.
Then,
$$
C':=x_i\overrightarrow{C}x_q^{-2}x_i^{-}\overleftarrow{C}x_qu_0x_i
$$
is a hamiltonian cycle in $G[(V(C)-\{x_q^-\})\cup\{u_0\}]$. As $q\geq 2$, $uv$ is an edge of $C'$. By replacing $uv$ with $ux_q^-v$ in $C'$, we get  a cycle longer than $C$, a contradiction. This completes the proof of Claim \ref{cla-3-6}.
\end{proof}

\smallskip

An $(x_q,y)$-path is \emph{good}  if it is a hamiltonian path in $G[V(C)]$.
Let $P$ be a good $(x_q,y)$-path and let $z,w\in V(C)$. If $z\overrightarrow{P}w=z\overrightarrow{C}w$, then we call $C[z,w]$ a \emph{good segment} for $P$.

We will use the following claim frequently.

\smallskip

\begin{claim}\label{cla-3-7}
Let $P$ be a good $(x_q,y)$-path and let $C[z,w]$ be a good segment for $P$. Then, $y\notin N(u_0)$ and $N(y)\cap X^-\cap V(C[z,w))=\emptyset$.
\end{claim}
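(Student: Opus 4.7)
The plan is a standard rotation/insertion argument exploiting the fact that, along a good segment, the Hamiltonian path $P$ of $G[V(C)]$ behaves exactly like $C$, so the $x_j \to x_j^-$ hopping trick that was used in Claims~3.1 and 3.6 works verbatim with $P$ in place of $C$.

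First I would handle $y\notin N(u_0)$. Assume for contradiction that $y\in N(u_0)$. Since $x_q\in X=N(u_0)$ and $P$ is a Hamiltonian path of $G[V(C)]$ from $x_q$ to $y$, the closed walk
\[
x_q\overrightarrow{P}y\,u_0\,x_q
\]
is a cycle of $G$ containing every vertex of $C$ together with $u_0$, hence of length $|V(C)|+1$. This contradicts the maximality of $C$, so $y\notin N(u_0)$.

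Next I would prove $N(y)\cap X^-\cap V(C[z,w))=\emptyset$. Suppose to the contrary that some $x_j^-$ lies in $N(y)\cap X^-\cap V(C[z,w))$. Because $C[z,w]$ is a good segment, $P$ traverses it in the direction of $C$; since $x_j^-$ lies strictly before $w$ on $C[z,w]$, its $C$-successor $x_j$ also lies on $C[z,w]$ and is the $P$-successor of $x_j^-$. Consequently we may write $P=x_q\overrightarrow{P}x_j^-\,x_j\overrightarrow{P}y$. Using the edges $u_0x_q$ and $u_0x_j$ (both guaranteed by $x_q,x_j\in X=N(u_0)$) and the edge $x_j^-y$, consider the cycle
\[
u_0\,x_q\overrightarrow{P}x_j^-\,y\overleftarrow{P}x_j\,u_0.
\]
It traverses $x_q\overrightarrow{P}x_j^-$, then the edge $x_j^-y$, then $y\overleftarrow{P}x_j$, then the two edges through $u_0$, thereby using every vertex of $V(P)=V(C)$ together with $u_0$. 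Its length is $|V(C)|+1$, contradicting the maximality of $C$.

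No serious obstacle is expected: the content of the claim is that the $x_j \leftrightarrow x_j^-$ swap argument, which the authors have already used several times on $C$ itself, transfers without change to any good Hamiltonian path $P$ of $G[V(C)]$, provided the swap is carried out inside a good segment (so that $x_j$ really is the $P$-successor of $x_j^-$). The only thing to be a little careful about is verifying that the endpoint $x_j$ of the rerouting does lie in the good segment whenever $x_j^-\in V(C[z,w))$, and this is immediate from the definition of good segment together with $x_j^-\neq w$.
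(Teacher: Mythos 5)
Your proposal is correct and follows essentially the same argument as the paper: the cycle $x_q\overrightarrow{P}yu_0x_q$ rules out $y\in N(u_0)$, and the rerouted cycle $x_q\overrightarrow{P}x_j^-y\overleftarrow{P}x_ju_0x_q$ (valid because the good-segment property guarantees $(x_j^-)_P^+=x_j$) rules out $x_j^-\in N(y)\cap X^-\cap V(C[z,w))$. No gaps.
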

\begin{proof}
Suppose to the contrary that $y\in N(u_0)$. As $x_q\in N(u_0)$ and $V(P)=V(C)$, $x_q\overrightarrow{P}yu_0x_q$ is a cycle longer than $C$, a contradiction. Hence, $y\notin N(u_0)$. If $N(y)\cap X^-\cap V(C[z,w))\neq\emptyset$,
then there exists some integer $i\in [1,m]$ such that $x_i^-\in N(y)\cap V(C[z,w))$.
As $C[z,w]$ is a good segment for $P$, we have $(x_i^-)_P^+=(x_i^-)_C^+=x_i$, and hence
$x_q\overrightarrow{P}x_i^-y\overleftarrow{P}x_iu_0x_q$ is a cycle longer than $C$, a contradiction. Therefore, $N(y)\cap X^-\cap V(C[z,w))=\emptyset$. This completes the proof of Claim \ref{cla-3-7}.
\end{proof}

\smallskip

\begin{claim}
\label{cla-3-8}
Let $a$ be an integer with $a\in [1,m]$ such that $x_a^-\in N(v)$. Then,
$a<q$, $N(x_a^{-2})\cap X^-\subseteq\{x_i^-: i\in [a, q]\}$ and $|N(x_a^{-2})\cap X^-|\geq m-k+2$.
\end{claim}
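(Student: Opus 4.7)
The plan is to dispose of the three assertions in turn. The first, $a<q$, comes essentially for free: since $x_a^-\in N(v)$, the index $a$ lies in $\{i_1,\ldots,i_p\}$, so $a\leq i_p$, and by Claim~\ref{cla-3-6}, $i_p<q$; hence $a<q$.

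For the third assertion, I apply Claim~\ref{cla-3-5} with $z=x_a^{-2}$, which gives
$$|N(x_a^{-2})\cap X^-|\geq m-k+2-|N(u_0)\cap\{x_a^{-2}\}|,$$
so it suffices to rule out $x_a^{-2}\in N(u_0)$. Supposing otherwise, $x_a^{-2}\in X$, and since $x_a^{-2}$ sits strictly between $x_{a-1}$ and $x_a$ on $C$, the only candidate is $x_a^{-2}=x_{a-1}$; this forces $a\geq 2$ and collapses the arc $x_{a-1}\overrightarrow{C}\,x_a$ to the length-two path $x_{a-1},x_a^-,x_a$. Combining the chords $ux_q^-$ and $vx_a^-$ with the edges $u_0x_q$ and $u_0x_{a-1}$, I then form
$$C^*:=x_q\overrightarrow{C}\,u\,x_q^-\overleftarrow{C}\,x_a^-\,v\overrightarrow{C}\,x_{a-1}\,u_0\,x_q.$$
Its three $C$-arcs $[x_q,u]$, $[x_a^-,x_q^-]$, $[v,x_{a-1}]$ are pairwise disjoint and appear consecutively along $C$, separated only by the single-edge gaps $x_q^-x_q$, $uv$, $x_{a-1}x_a^-$, so together they cover $V(C)$. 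Hence $V(C^*)=V(C)\cup\{u_0\}$ and $|C^*|=|C|+1$, contradicting the choice of $C$.

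For the second assertion, I suppose $x_j^-\in N(x_a^{-2})$ for some $j\in[1,m]\setminus[a,q]$ and splice the new chord $x_a^{-2}x_j^-$ into the construction alongside $ux_q^-$ and $vx_a^-$ to manufacture a cycle of length $|C|+1$. The construction case-splits on the position of $j$: for $j>q$, I take
$$C^*:=x_j\,u_0\,x_q\overrightarrow{C}\,x_j^-\,x_a^{-2}\overleftarrow{C}\,v\,x_a^-\overrightarrow{C}\,x_q^-\,u\overleftarrow{C}\,x_j;$$
for $j<a$, I take
$$C^*:=x_q\overrightarrow{C}\,u\,x_q^-\overleftarrow{C}\,x_a^-\,v\overrightarrow{C}\,x_j^-\,x_a^{-2}\overleftarrow{C}\,x_j\,u_0\,x_q.$$
In either case $C^*$ decomposes, together with $u_0$, into four pairwise disjoint $C$-arcs appearing consecutively along $C$ and separated by the four single-edge gaps $x_q^-x_q$, $x_j^-x_j$, $uv$, $x_a^{-2}x_a^-$, so $V(C^*)=V(C)\cup\{u_0\}$ and we again obtain $|C^*|=|C|+1>|C|$, a contradiction.

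The principal obstacle is the bookkeeping: in each cycle construction one must verify that the constituent $C$-arcs are indeed pairwise disjoint and collectively exhaust $V(C)$, which requires careful tracking of the cyclic positions of $x_a$, $x_a^-$, $x_a^{-2}$, $x_j$, $x_j^-$, $x_q$, $x_q^-$, $u$, and $v$. The $j>q$ versus $j<a$ split is essentially forced because the role of $x_j$ on $C$ flips between the two cases: for $j>q$ it lies on the arc $x_q\overrightarrow{C}u$, while for $j<a$ it lies on the arc $v\overrightarrow{C}x_a^{-2}$, and this dictates a different location at which the chord $x_a^{-2}x_j^-$ has to be introduced.
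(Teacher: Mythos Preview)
Your proof is correct and, at the level of the actual cycle constructions, coincides with the paper's. The paper packages the argument through the ``good path'' machinery: it defines $P:=x_q\overrightarrow{C}ux_q^-\overleftarrow{C}x_a^-v\overrightarrow{C}x_a^{-2}$, observes that $C[x_q,u]$ and $C[v,x_a^{-2}]$ are good segments for $P$, and then invokes Claim~\ref{cla-3-7} once to obtain both $x_a^{-2}\notin N(u_0)$ and $N(x_a^{-2})\cap X^-\cap\bigl(V(C[x_q,u))\cup V(C[v,x_a^{-2}))\bigr)=\emptyset$. If you unwind the proof of Claim~\ref{cla-3-7} in that setting, the cycle $x_q\overrightarrow{P}x_j^-\,x_a^{-2}\overleftarrow{P}x_j\,u_0x_q$ it produces is precisely your $C^*$ in the case $j>q$ (when $x_j^-$ lies in $C[x_q,u)$) and in the case $j<a$ (when $x_j^-$ lies in $C[v,x_a^{-2})$); your cycle for $x_a^{-2}\in N(u_0)$ is likewise $P$ closed up by the edge $x_a^{-2}u_0$ and $u_0x_q$.

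So the only real difference is organizational: the paper abstracts the repeated ``splice a chord into the good path and close through $u_0$'' step into Claim~\ref{cla-3-7}, which lets it dispatch Claims~\ref{cla-3-8}, \ref{cla-3-9}, and \ref{cla-3-10} uniformly without redoing the arc bookkeeping each time. Your direct approach is equally valid but requires the disjointness and coverage checks you describe; the paper's abstraction amortizes that work across the three claims.
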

\begin{figure}[h]
\begin{center}
\psfrag{A}{$x_a^{-2}$}
\psfrag{B}{$x_a^-$}
\psfrag{C}{$x_a$}
\psfrag{D}{$x_q^-$}
\psfrag{E}{$x_q$}
\psfrag{I}{$\overrightarrow{C}$}
\psfrag{J}{$\overleftarrow{C}$}
\psfrag{U}{$u$}
\psfrag{V}{$v$}
\psfrag{Z}{$\overrightarrow{C}$}
\includegraphics[width=5cm]{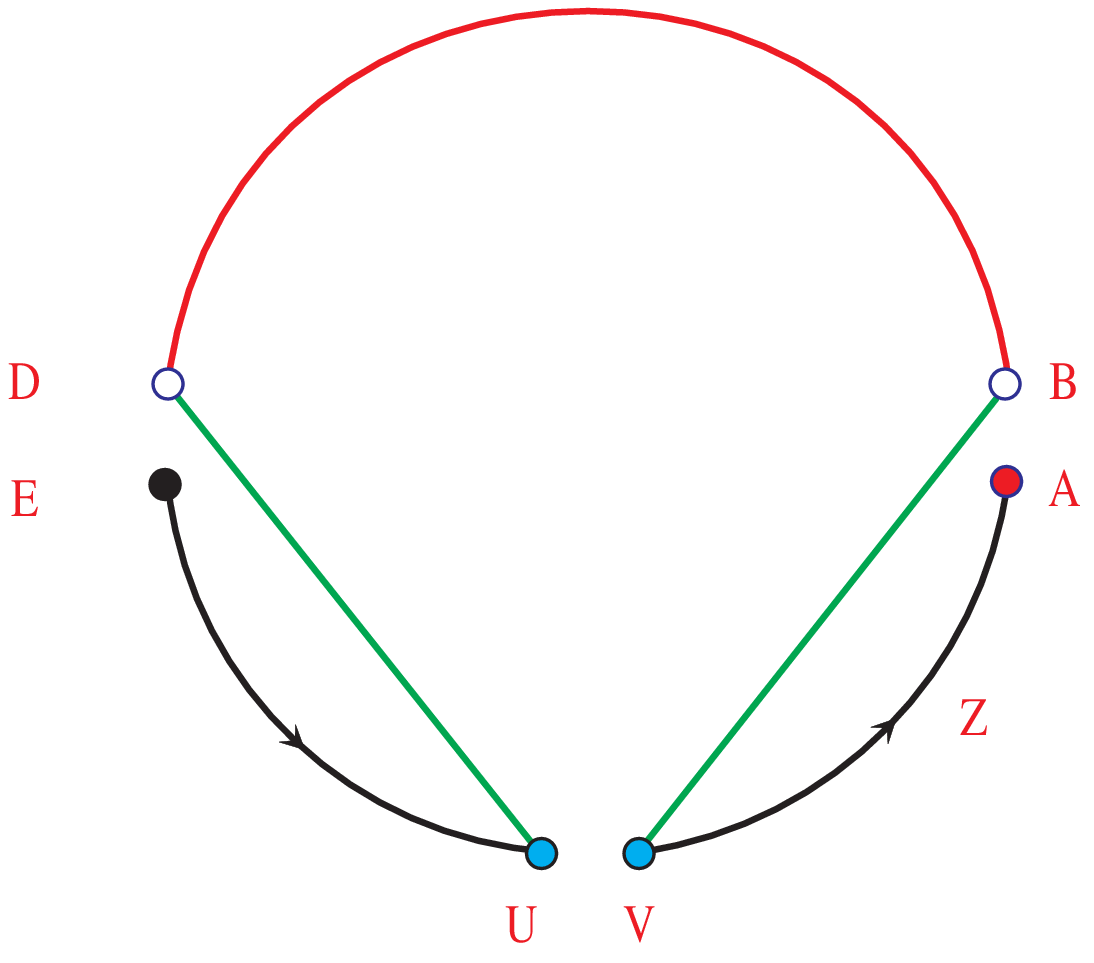}\\
\caption{A good $(x_q,x_a^{-2})$-path with good segments $C[x_q,u]$ and $C[v,x_a^{-2}]$.}
 \label{fig-1}
\end{center}
\end{figure}

\begin{proof}
Recall that $N(v)\cap X^-=\{x_{i_1}^-,\ldots,x_{i_p}^-\}$ and
$q:=\min\{i\in [1,m]:~ x_i^-\in N(u)\}$.
By Claim \ref{cla-3-6}, we have $a\leq i_p<q$.
Let $P:=x_q\overrightarrow{C}ux_q^-\overleftarrow{C}x_a^-v\overrightarrow{C}x_a^{-2}$ (see Fig. \ref{fig-1}). Then $P$ is a good $(x_q,x_a^{-2})$-path  with good segments $C[x_q,u]$ and $C[v,x_a^{-2}]$.
By applying Claim \ref{cla-3-7} with $y:=x_a^{-2}$, we have  $x_a^{-2}\notin N(u_0)$ and $$
N(x_a^{-2})\cap X^-\cap (V(C[x_q,u))\cup V(C[v,x_a^{-2})))=\emptyset.
$$
It follows that $N(x_a^{-2})\cap X^-\subseteq\{x_i^-: i\in [a, q]\}$. On the other hand, by Claim \ref{cla-3-5}, we have $|N(x_a^{-2})\cap X^-|\geq m-k+2-|N(u_0)\cap \{x_a^{-2}\}|$. Together with $x_a^{-2}\notin N(u_0)$, we can derive that $|N(x_a^{-2})\cap X^-|\geq m-k+2$. This completes the proof of Claim \ref{cla-3-8}.
\end{proof}

\smallskip

\begin{claim}
\label{cla-3-9}
Let $a,b$ be two integers with $a,b\in [1,m]$ and $a\neq b$. If $x_a^-\in N(v)$ and $x_b^-\in N(x_a^{-2})$, then
$a<b\leq q$, $N(x_b^{-2})\cap X^-\subseteq\{x_i^-:~i\in [1,a)\cup [b,q]\}$ and $|N(x_b^{-2})\cap X^-|\geq m-k+2$.
\end{claim}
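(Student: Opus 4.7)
The plan is to imitate the proof of Claim \ref{cla-3-8} by constructing a good $(x_q, x_b^{-2})$-path that absorbs the extra chord $x_b^- x_a^{-2}$, then invoke Claim \ref{cla-3-7} on its two forward segments to pin down where $N(x_b^{-2})$ can meet $X^-$, and finally apply Claim \ref{cla-3-5} for the cardinality bound.

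First, I would settle the range of $b$. Applying Claim \ref{cla-3-8} to the integer $a$ gives $a < q$ and $N(x_a^{-2}) \cap X^- \subseteq \{x_i^- : i \in [a, q]\}$. Since $x_b^- \in N(x_a^{-2}) \cap X^-$ and $b \neq a$, this immediately forces $a < b \leq q$.

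Next I would build the path
$$
P := x_q \overrightarrow{C} u\, x_q^- \overleftarrow{C} x_b^-\, x_a^{-2} \overleftarrow{C} v\, x_a^- \overrightarrow{C} x_b^{-2}.
$$
The three chords used are $u x_q^-$ (from the definition of $q$), $x_b^- x_a^{-2}$ (by hypothesis), and $v x_a^-$ (since $x_a^- \in N(v)$). Because $1 \leq a < b \leq q \leq m$ and $uv \in E(x_m \overrightarrow{C} x_1^{-2})$, the four $C$-arcs $C[x_q, u]$, $C[x_b^-, x_q^-]$, $C[v, x_a^{-2}]$, $C[x_a^-, x_b^{-2}]$ partition $V(C)$ in the cyclic order $v, \ldots, x_a^{-2}, x_a^-, \ldots, x_b^{-2}, x_b^-, \ldots, x_q^-, x_q, \ldots, u$. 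Thus $P$ is a Hamiltonian path of $G[V(C)]$ from $x_q$ to $x_b^{-2}$ whose good segments are precisely the two forward arcs $C[x_q, u]$ and $C[x_a^-, x_b^{-2}]$.

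Applying Claim \ref{cla-3-7} to $P$ with the good segment $C[x_q, u]$ yields both $x_b^{-2} \notin N(u_0)$ and $N(x_b^{-2}) \cap X^- \cap V(C[x_q, u)) = \emptyset$, excluding $\{x_i^- : q < i \leq m\}$ from $N(x_b^{-2}) \cap X^-$. A second application, this time to the good segment $C[x_a^-, x_b^{-2}]$, excludes $\{x_i^- : a \leq i < b\}$, and together these give $N(x_b^{-2}) \cap X^- \subseteq \{x_i^- : i \in [1, a) \cup [b, q]\}$. Finally, feeding $z := x_b^{-2}$ (and using $x_b^{-2} \notin N(u_0)$) into Claim \ref{cla-3-5} produces $|N(x_b^{-2}) \cap X^-| \geq m - k + 2$, completing the proof. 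The only real obstacle is bookkeeping: verifying that the four $C$-arcs really do partition $V(C)$ and that the two forward arcs are good segments of $P$; once that is checked, everything reduces to invoking the earlier claims.
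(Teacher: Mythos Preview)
Your proposal is correct and follows essentially the same approach as the paper: you deduce $a<b\le q$ from Claim~\ref{cla-3-8}, build the identical good $(x_q,x_b^{-2})$-path $P=x_q\overrightarrow{C}ux_q^-\overleftarrow{C}x_b^-x_a^{-2}\overleftarrow{C}vx_a^-\overrightarrow{C}x_b^{-2}$ with good segments $C[x_q,u]$ and $C[x_a^-,x_b^{-2}]$, apply Claim~\ref{cla-3-7} to obtain $x_b^{-2}\notin N(u_0)$ and the containment for $N(x_b^{-2})\cap X^-$, and finish with Claim~\ref{cla-3-5}.
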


\begin{figure}[h]
\begin{center}
\psfrag{A}{$x_a^{-2}$}
\psfrag{B}{$x_a^{-}$}
\psfrag{C}{$x_a$}
\psfrag{D}{$x_q^{-}$}
\psfrag{E}{$x_q$}
\psfrag{L}{$x_b$}
\psfrag{M}{$x_b^{-2}$}
\psfrag{N}{$x_b^{-}$}
\psfrag{U}{$u$}
\psfrag{V}{$v$}
\psfrag{Z}{$\overrightarrow{C}$}
\includegraphics[width=5cm]{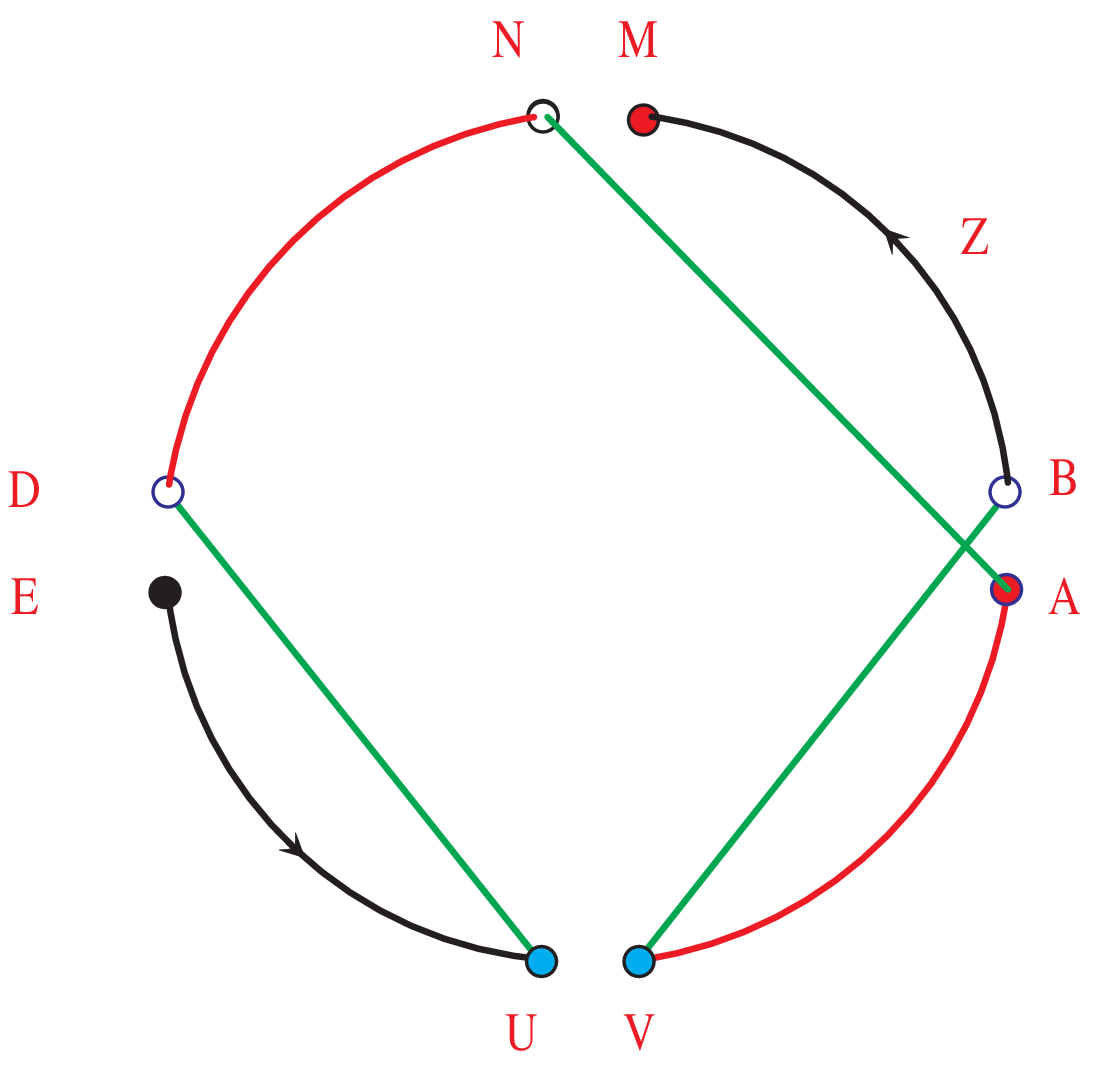}\\
\caption{A good $(x_q,x_b^{-2})$-path with good segments $C[x_q,u]$ and  $C[x_a^-,x_b^{-2}]$.}
 \label{fig-2}
\end{center}
\end{figure}

\begin{proof}
As $x_a^-\in N(v)$, by Claim \ref{cla-3-8}, we have $N(x_a^{-2})\cap X^-\subseteq\{x_a^-, x_{a+1}^-,\ldots,x_q^-\}$. This together with $x_b^-\in N(x_a^{-2})$ and $a\neq b$ implies that $a< b\leq q$.

Let $P:=x_q\overrightarrow{C}ux_q^-\overleftarrow{C}x_b^-x_a^{-2}\overleftarrow{C}
vx_a^-\overrightarrow{C}x_b^{-2}$ (see Fig. \ref{fig-2}). Then $P$ is a good $(x_q,x_b^{-2})$-path with good segments $C[x_q,u]$ and  $C[x_a^-,x_b^{-2}]$.
By applying Claim \ref{cla-3-7} with $y:=x_b^{-2}$, we can derive that $x_b^{-2}\notin N(u_0)$ and
$$
N(x_b^{-2})\cap X^-\cap (V(C[x_q,u))\cup V(C[x_a^-,x_b^{-2})))=\emptyset.
$$
It follows that $N(x_b^{-2})\cap X^-\subseteq\{x_i^-: i\in [1,a)\cup [b, q]\}$. On the other hand, by Claim \ref{cla-3-5}, we have $|N(x_b^{-2})\cap X^-|\geq m-k+2-|N(u_0)\cap \{x_b^{-2}\}|$. Together with $x_b^{-2}\notin N(u_0)$, we can derive that $|N(x_b^{-2})\cap X^-|\geq m-k+2$. This completes the proof of Claim \ref{cla-3-9}.
\end{proof}

\smallskip

\begin{claim}
\label{cla-3-10}
Let $a,b,c$ be three integers with $a,b,c\in [1,m]$ such that $x_a^-\in N(v)$, $x_b^-\in N(x_a^{-2})$ and $x_c^-\in N(x_b^{-2})$. If $q\geq c>b>a$, then
$N(x_c^{-2})\cap X^-\subseteq\{x_i^-:~i\in [a,b)\cup [c,q]\}$ and $|N(x_c^{-2})\cap X^-|\geq m-k+2$.
\end{claim}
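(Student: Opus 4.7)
The plan is to re-run the pattern of Claims~\ref{cla-3-8} and~\ref{cla-3-9}, but one layer deeper: I will build a good $(x_q, x_c^{-2})$-path $P$ whose good segments, taken together, contain every $x_i^-$ with $i \in [1, a) \cup [b, c) \cup (q, m]$, then feed $P$ and $y := x_c^{-2}$ into Claim~\ref{cla-3-7}. That claim will simultaneously yield $x_c^{-2} \notin N(u_0)$ and exclude those indices from $N(x_c^{-2}) \cap X^-$, producing the containment $N(x_c^{-2}) \cap X^- \subseteq \{x_i^-: i \in [a, b) \cup [c, q]\}$; the lower bound will then be an immediate consequence of Claim~\ref{cla-3-5} applied with $z := x_c^{-2}$ and the newly-obtained $x_c^{-2} \notin N(u_0)$.

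For the path I propose
$$
P := x_q \overrightarrow{C} u\; x_q^- \overleftarrow{C} x_c^-\; x_b^{-2} \overleftarrow{C} x_a^-\; v \overrightarrow{C} x_a^{-2}\; x_b^- \overrightarrow{C} x_c^{-2},
$$
whose four non-cycle hops $u x_q^-$, $x_c^- x_b^{-2}$, $x_a^- v$ and $x_a^{-2} x_b^-$ are edges by the definition of $q$ and by the three adjacency hypotheses. Claims~\ref{cla-3-8} and~\ref{cla-3-9} combined with $q \geq c > b > a$ force $a < b < c \leq q$, while \eqref{eqn-7} localises $u, v$ strictly inside the arc $x_m \overrightarrow{C} x_1^{-2}$. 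These inequalities make the five cut points $(x_q^-, x_q)$, $(u, v)$, $(x_a^{-2}, x_a^-)$, $(x_b^{-2}, x_b^-)$, $(x_c^{-2}, x_c^-)$ of $C$ occupy five distinct positions, so $P$ traverses the five pairwise disjoint forward arcs $C[x_q, u]$, $C[x_c^-, x_q^-]$, $C[x_a^-, x_b^{-2}]$, $C[v, x_a^{-2}]$, $C[x_b^-, x_c^{-2}]$, whose union is $V(C)$. Hence $P$ is a good $(x_q, x_c^{-2})$-path, and the three arcs traversed in the $\overrightarrow{C}$ direction, namely $C[x_q, u]$, $C[v, x_a^{-2}]$ and $C[x_b^-, x_c^{-2}]$, are its good segments.

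Once this is in place the conclusion is a three-line accounting: $V(C[x_q, u))$ contains $x_i^-$ for $i \in (q, m]$, $V(C[v, x_a^{-2}))$ contains $x_i^-$ for $i \in [1, a)$, and $V(C[x_b^-, x_c^{-2}))$ contains $x_i^-$ for $i \in [b, c)$, so Claim~\ref{cla-3-7} forces $N(x_c^{-2})$ to avoid $X^-$ on exactly the indices $[1, a) \cup [b, c) \cup (q, m]$, while Claim~\ref{cla-3-5} combined with $x_c^{-2} \notin N(u_0)$ delivers $|N(x_c^{-2}) \cap X^-| \geq m - k + 2$. The only delicate point I anticipate is certifying that the five forward arcs listed above really partition $V(C)$ and that $P$ enters and exits each one at the correct endpoint; this is pure bookkeeping that uses $a < b < c \leq q$ and the location of $uv$ inside $x_m \overrightarrow{C} x_1^{-2}$, and degenerate instances such as $a = 1$ with $v = x_1^{-2}$ or $c = q$ merely collapse one arc to a single vertex while leaving $P$ and its three good segments intact.
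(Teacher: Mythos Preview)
Your proposal is correct and follows essentially the same approach as the paper: you construct the identical good $(x_q,x_c^{-2})$-path $P=x_q\overrightarrow{C}ux_q^-\overleftarrow{C}x_c^-x_b^{-2}\overleftarrow{C}x_a^-v\overrightarrow{C}x_a^{-2}x_b^-\overrightarrow{C}x_c^{-2}$, identify the same three good segments $C[x_q,u]$, $C[v,x_a^{-2}]$, $C[x_b^-,x_c^{-2}]$, and then apply Claims~\ref{cla-3-7} and~\ref{cla-3-5} exactly as the paper does. Your additional bookkeeping (the explicit five-arc partition of $V(C)$ and the discussion of degenerate cases) simply fleshes out what the paper leaves implicit.
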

\begin{figure}[h]
\begin{center}
\psfrag{A}{$x_a^{-2}$}
\psfrag{B}{$x_a^-$}
\psfrag{C}{$x_a$}
\psfrag{D}{$x_q^-$}
\psfrag{E}{$x_q$}
\psfrag{L}{$x_b$}
\psfrag{M}{$x_c^{-2}$}
\psfrag{N}{$x_c^{-}$}
\psfrag{O}{$x_c$}
\psfrag{P}{$x_b^{-2}$}
\psfrag{Q}{$x_b^-$}
\psfrag{J}{$a<c$}
\psfrag{U}{$u$}
\psfrag{V}{$v$}
\psfrag{Z}{$\overrightarrow{C}$}
\includegraphics[width=6cm]{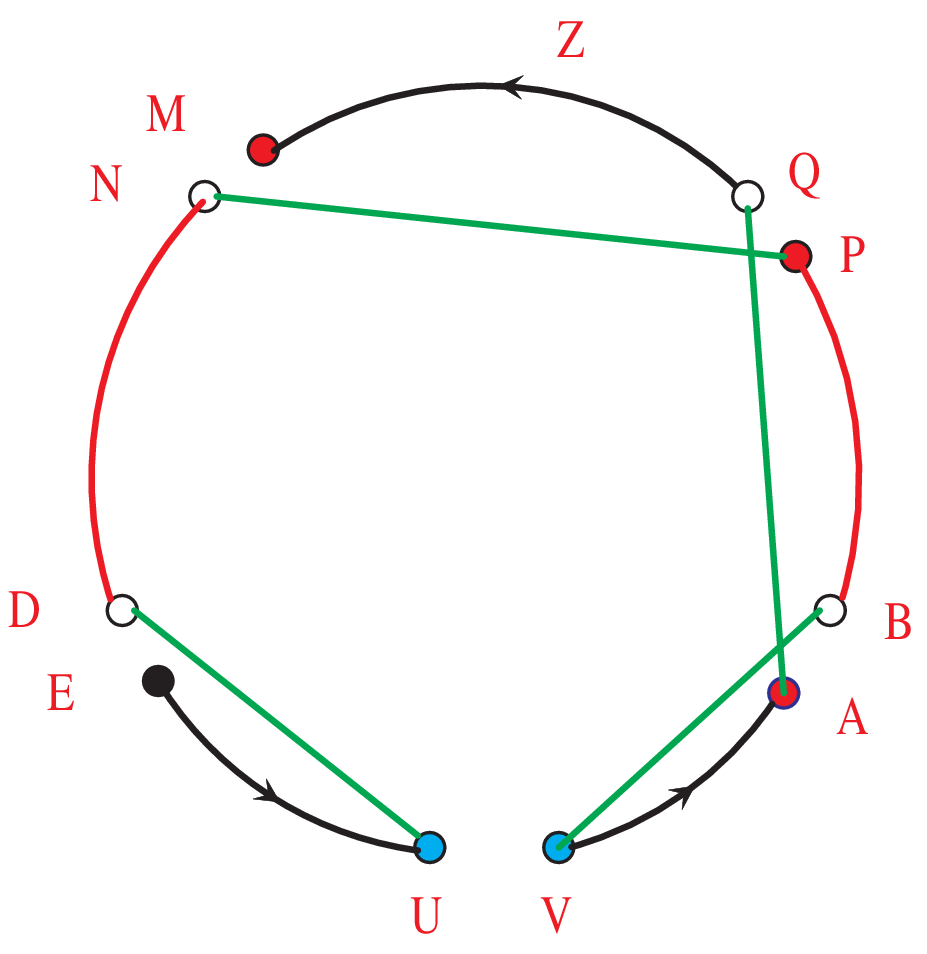}\\
\caption{A good $(x_q,x_c^{-2})$-path with good segments $C[x_q,u]$, $C[v,x_a^{-2}]$ and $C[x_b^-,x_c^{-2}]$.}
 \label{fig-3}
\end{center}
\end{figure}

\begin{proof}
Let $P:=x_q\overrightarrow{C}ux_q^-\overleftarrow{C}x_c^-x_b^{-2}\overleftarrow{C}
x_a^-v\overrightarrow{C}x_a^{-2}x_b^-\overrightarrow{C}x_c^{-2}$ (see Fig. \ref{fig-3}). Then $P$ is a good $(x_q,x_c^{-2})$-path with good segments $C[x_q,u]$, $C[v,x_a^{-2}]$ and $C[x_b^-,x_c^{-2}]$.
By applying Claim \ref{cla-3-7} with $y:=x_c^{-2}$, we can derive that $x_c^{-2}\notin N(u_0)$ and
$$
N(x_c^{-2})\cap X^-\cap (V(C[x_q,u))\cup V(C[v,x_a^{-2}))\cup V(C[x_b^-,x_c^{-2})))=\emptyset.
$$
It follows that $N(x_c^{-2})\cap X^-\subseteq\{x_i^-:~i\in [a,b)\cup [c,q]\}$. On the other hand, by Claim \ref{cla-3-5}, we have $|N(x_c^{-2})\cap X^-|\geq m-k+2-|N(u_0)\cap \{x_c^{-2}\}|$. Together with $x_c^{-2}\notin N(u_0)$, we can derive that $|N(x_c^{-2})\cap X^-|\geq m-k+2$. This completes the proof of Claim \ref{cla-3-10}.
\end{proof}

\smallskip

\begin{claim}\label{cla-3-11}
$i_p\leq \frac{3}{2}(m-k)+2$.
\end{claim}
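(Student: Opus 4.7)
The plan is to apply Claim \ref{cla-3-8} at the index $a:=i_p$ (which is legitimate because $x_{i_p}^-\in N(v)$ by the very definition of $i_p$) and combine its conclusion with the upper bound $q\le k$ that is implicit in \eqref{eqn-9}. First, Claim \ref{cla-3-8} yields simultaneously
$$
N(x_{i_p}^{-2})\cap X^-\;\subseteq\;\{x_i^-:i\in[i_p,q]\}\qquad\text{and}\qquad |N(x_{i_p}^{-2})\cap X^-|\;\ge\;m-k+2.
$$
Since the left-hand set is contained in a collection of $q-i_p+1$ distinct indices, these two facts force $q-i_p+1\ge m-k+2$, i.e., $q\ge i_p+m-k+1$.

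Next I would extract $q\le k$ from \eqref{eqn-9}: since $|[q,m]|=m-q+1\ge m-k+1$, we have $q\le k$, and substituting into the previous display yields
$$
i_p\;\le\;2k-m-1.
$$
The remaining step is purely algebraic. By Claim \ref{cla-3.1} together with the Fan-type hypothesis of Theorem \ref{thm-main}, $m\ge\mu_{k+1}(G)\ge\frac{7k-6}{5}$, so $5m\ge 7k-6$. Doubling and rearranging gives $4k-2m-2\le 3m-3k+4$, equivalently $2k-m-1\le \frac{3}{2}(m-k)+2$; combined with $i_p\le 2k-m-1$, this completes the proof.

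The main thing to check is that no stronger structural input than Claim \ref{cla-3-8} is actually needed here, since Claims \ref{cla-3-9} and \ref{cla-3-10} have just been set up and one might naturally expect them to play a role. In fact the two ingredients $q\le k$ (from \eqref{eqn-9}) and $q\ge i_p+m-k+1$ (from Claim \ref{cla-3-8}) already deliver the tight conversion: the equivalence $2k-m-1\le\frac{3}{2}(m-k)+2\Leftrightarrow m\ge\frac{7k-6}{5}$ matches the hypothesis of Theorem \ref{thm-main} exactly, suggesting that Claims \ref{cla-3-9} and \ref{cla-3-10} are reserved for subsequent claims in the global argument rather than needed for Claim \ref{cla-3-11} itself.
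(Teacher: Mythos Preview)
Your proof is correct and follows essentially the same approach as the paper's: both combine the bound $|[i_p,q]|\ge m-k+2$ from Claim~\ref{cla-3-8} (applied with $a=i_p$) with the bound $|[q,m]|\ge m-k+1$ from \eqref{eqn-9}, together with $m\ge(7k-6)/5$ from Claim~\ref{cla-3.1}. The paper packages these as a single summed inequality leading to a contradiction, while you separate out $q\le k$ and $i_p\le 2k-m-1$ and then convert algebraically, but the content is identical.
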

\begin{proof}
Note that $x_{i_p}^-\in N(v)$. By using Claim \ref{cla-3-8} with $a=i_p$, we can derive that $i_p<q$ and
\begin{equation}\label{eqn-11}
|[i_p,q]|\geq |N(x_a^{-2})\cap X^-|\geq m-k+2.
\end{equation}
If $i_p>\frac{3}{2}(m-k)+2$, then by \eqref{eqn-11} and \eqref{eqn-9}, we can derive that
\begin{eqnarray*}
m+2&=&i_p+|[i_p,q]|+|[q,m]|\\
&>& \left[\frac{3}{2}(m-k)+2\right]+(m-k+2)+(m-k+1),
\end{eqnarray*}
and hence $m<\frac{7k-6}{5}$. On the other hand, by Claim \ref{cla-3.1}, we have
$m\geq\mu_{k+1}(G)\geq\frac{7k-6}{5}$, a contradiction. Therefore, Claim \ref{cla-3-11} is true.
\end{proof}

\smallskip

Recall that $N(v)\cap X^-=\{x_{i_1}^-,x_{i_2}^-,\ldots,x_{i_p}^-\}$, where $1\leq i_1<i_2<\cdots<i_p\leq m$. Set $h:=\lceil p/2\rceil$. It follows from \eqref{eqn-8} that $p=|N(v)\cap X^-|\geq m-k+2$. Hence,
\begin{equation}\label{eqn-12}
|[1,i_h]|=i_h\geq h=\lceil p/2\rceil\geq  p/2\geq (m-k+2)/2.
\end{equation}
By using Claim \ref{cla-3-8} with $a=i_h$, we have
\begin{equation}\label{eqn-13}
N(x_{i_h}^{-2})\cap X^-\subseteq \{x_i^-:~i\in [i_h,q]\}
\end{equation}
and
\begin{equation}\label{eqn-14}
|N(x_{i_h}^{-2})\cap X^-|\geq m-k+2.
\end{equation}

\smallskip

\begin{claim}\label{cla-3-12}
$N(v)\cap N(x_{i_h}^{-2})\cap X^-\nsubseteq\{x_{i_h}^-\}$.
\end{claim}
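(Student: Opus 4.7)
My plan is to argue by contradiction. Suppose $N(v) \cap N(x_{i_h}^{-2}) \cap X^- \subseteq \{x_{i_h}^-\}$, so that $x_{i_j}^- \notin N(x_{i_h}^{-2})$ for every $j \in (h, p]$. The key auxiliary object will be
$$b^* := \max\{i \in [1, m] : x_i^- \in N(x_{i_h}^{-2})\}.$$
Because $x_{i_h}^- \in N(x_{i_h}^{-2}) \cap X^-$ via the $C$-edge $x_{i_h}^- x_{i_h}^{-2}$ and $|N(x_{i_h}^{-2}) \cap X^-| \geq m - k + 2 \geq 2$ by \eqref{eqn-14}, I will first verify that $b^* > i_h$, so $b^*$ witnesses a genuinely new neighbor.

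I will then feed $(a, b) := (i_h, b^*)$ into Claim~\ref{cla-3-9}, whose hypotheses $x_{i_h}^- \in N(v)$ and $x_{b^*}^- \in N(x_{i_h}^{-2})$ are immediate. The conclusion gives $b^* \leq q$, $N(x_{b^*}^{-2}) \cap X^- \subseteq \{x_i^- : i \in [1, i_h) \cup [b^*, q]\}$, and $|N(x_{b^*}^{-2}) \cap X^-| \geq m - k + 2$; a cardinality comparison then forces the upper bound $b^* \leq i_h + q - m + k - 2$. For the matching lower bound, I will introduce $s := |\{j \in (h, p] : i_j \leq b^*\}|$; the standing assumption together with \eqref{eqn-13} restricts $N(x_{i_h}^{-2}) \cap X^-$ to $\{x_i^- : i \in [i_h, b^*]\} \setminus \{x_{i_{h+1}}^-, \ldots, x_{i_{h+s}}^-\}$, and pairing this with \eqref{eqn-14} yields $b^* \geq i_h + s + m - k + 1$. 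Subtracting the two bounds and using $q \leq k$ from \eqref{eqn-9} gives $s \leq 3k - 2m - 3$.

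The endgame is a split on whether $s$ attains its maximum possible value $p - h$. If $s < p - h$, then $i_{h+s+1}$ exists, must exceed $b^*$, and therefore satisfies $i_{h+s+1} \geq i_h + s + m - k + 2$; the strict chain $i_{h+s+1} < i_{h+s+2} < \cdots < i_p$ combined with $i_h \geq h$ will give $i_p \geq m - k + p + 1$, which against Claim~\ref{cla-3-11} forces $p \leq \frac{m-k}{2} + 1$, contradicting \eqref{eqn-8} since $m \geq k$. If instead $s = p - h = \lfloor p/2 \rfloor$, substituting into $s \leq 3k - 2m - 3$, using $p \leq 2\lfloor p/2 \rfloor + 1$, and applying \eqref{eqn-8} forces $m \leq \frac{7k-7}{5}$, which violates Claim~\ref{cla-3.1}. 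The hard part will be arranging these two-sided bounds on $b^*$ cleanly so that both branches of the case split terminate on a contradiction; no graph-theoretic input beyond Claims~\ref{cla-3-8}, \ref{cla-3-9}, \ref{cla-3-11}, \ref{cla-3.1} and \eqref{eqn-8}--\eqref{eqn-14} is required.
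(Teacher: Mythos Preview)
Your argument is correct. The auxiliary index $b^*$ you introduce coincides with the index the paper calls $r$, and your two-sided squeeze on $b^*$ together with the case split on $s$ lands on exactly the same terminal inequality $m\leq (7k-7)/5$ as the paper in the branch $s=p-h$.

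The route, however, is organised differently. The paper works with the \emph{union} $(N(v)\cup N(x_{i_h}^{-2}))\cap X^-$: inclusion--exclusion plus the contradictory hypothesis gives $|(N(v)\cup N(x_{i_h}^{-2}))\cap X^-|\geq 2(m-k)+3$, and comparing this count with Claim~\ref{cla-3-11} forces $r>i_p$, so that $x_r^-\in N(x_{i_h}^{-2})\setminus N(v)$ automatically. One then reads off $|(i_h,r]|\geq \lfloor p/2\rfloor+m-k+1$ directly from the union count, and a single summation with \eqref{eqn-9} and the bound from Claim~\ref{cla-3-9} yields $m\leq(7k-7)/5$ with no case analysis. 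Your version instead tracks explicitly, via the parameter $s$, how many of the $v$-neighbours $x_{i_{h+1}}^-,\ldots,x_{i_p}^-$ sit below $b^*$; this avoids the preliminary comparison with $i_p$ but costs a case split, with Case~1 disposed of separately through Claim~\ref{cla-3-11}. Both arguments use the same ingredients (Claims~\ref{cla-3.1}, \ref{cla-3-8}, \ref{cla-3-9}, \ref{cla-3-11} and \eqref{eqn-8}--\eqref{eqn-14}); the paper's union trick just packages the counting so that the two cases collapse into one.
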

\begin{proof}
By way of contradiction, assume that  Claim \ref{cla-3-12} is false, then
$$
|N(v)\cap N(x_{i_h}^{-2})\cap X^-|\leq 1,
$$
and hence
$$
|(N(v)\cup N(x_{i_h}^{-2}))\cap X^-|\geq|N(v)\cap X^-|+|N(x_{i_h}^{-2})\cap X^-|-1.
$$
By combining this inequality with \eqref{eqn-8} and \eqref{eqn-14}, we get
\begin{equation}\label{eqn-15}
|(N(v)\cup N(x_{i_h}^{-2}))\cap X^-|\geq p+(m-k+2)-1\geq 2(m-k)+3.
\end{equation}
Denote $r:=\max~\{i:~ i\in [1,m], x_i^-\in N(v)\cup N(x_{i_h}^{-2})\}$. By \eqref{eqn-15}, we see that $r\geq 2(m-k)+3$.
Moreover, by Claim \ref{cla-3-11} and the definition of $N(v)\cap X^-$, we have
$$
i_p=\max~\{i:~ i\in [1,m], x_i^-\in N(v)\}\leq \frac{3}{2}(m-k)+2.
$$
Hence, $r>i_p>i_h$. Together with $x_r^-\in N(v)\cup N(x_{i_h}^{-2})$, we have
$x_r^-\in N(x_{i_h}^{-2})\setminus N(v)$. In particular, $x_{i_h}^{-2}\neq v$.
Recall that $x_{i_h}^-\in N(v)$.
By applying Claim \ref{cla-3-9} with $(a,b):=(i_h,r)$, we can derive that $r\leq q$ and
\begin{equation}\label{eqn-16}
|[1,i_h)\cup [r,q]|\geq |N(x_r^{-2})\cap X^-|\geq m-k+2.
\end{equation}
On the other hand, by \eqref{eqn-13} and the definition of $N(v)\cap X^-$, we have
$$
(N(v)\cup N(x_{i_h}^{-2}))\cap \{x_i^-:~i\in [1,i_h]\}=\{x_{i_j}^-:~j\in [1,h]\},
$$
Together with \eqref{eqn-15}, we can derive that
\begin{eqnarray*}
|(i_h,r]|&\geq& |(N(v)\cup N(x_{i_h}^{-2}))\cap X^-|-|(N(v)\cup N(x_{i_h}^{-2}))\cap \{x_i^-:~i\in [1,i_h]\}|\\
&\geq& (p+m-k+1)-h\\
&=&\left\lfloor p/2\right\rfloor+m-k+1.
\end{eqnarray*}
By combining this inequality with \eqref{eqn-9} and \eqref{eqn-16}, we get
\begin{eqnarray*}
m&=&|(i_h,r]|+|[q,m]|+|[1,i_h)\cup[r,q]|+|\{i_h\}|-|\{r,q\}|\\
&\geq& \left(\left\lfloor p/2\right\rfloor+m-k+1\right)+(m-k+1)+ (m-k+2)+1-2\\
&=&\left\lfloor p/2\right\rfloor+3m-3k+3.
\end{eqnarray*}
This together with \eqref{eqn-8} implies that
$$
m\geq \left\lfloor\frac{m-k+2}{2}\right\rfloor+3m-3k+3\geq \frac{7m-7k+7}{2}.
$$
It follows that $m\leq(7k-7)/5<\mu_{k+1}(G)$, contrary to Claim \ref{cla-3.1}. Hence, Claim \ref{cla-3-12} is true.
\end{proof}

\smallskip

\begin{claim}\label{cla-3-13}
There exists an integer $t\geq 0$ such that
$(k,m)=(5t+3,7t+3)$. Moreover, the following conditions hold:
\begin{enumerate}
  \item[\emph{(i)}] $(u,v)=(x_m,x_m^+)$;
  \item[\emph{(ii)}] $N(x_m)\cap X^-=\{x_{m-i}^-:~i\in [0,2t]\}$;
  \item[\emph{(iii)}] $|N(x_m^+)\cap X^-|=2t+2$ and $x_1^-,x_2^-\in N(x_m^+)$.
\end{enumerate}
\end{claim}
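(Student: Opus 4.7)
My plan is to push every inequality accumulated in Claims~\ref{cla-3-5}--\ref{cla-3-12} to simultaneous equality; the hypothesis $m\ge\mu_{k+1}(G)\ge(7k-6)/5$ will then be revealed as the tight threshold, and integrality forces $k=5t+3$, $m=7t+3$.

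First, I would invoke Claim~\ref{cla-3-12} to extract an index $j\in(h,p]$ with $x_{i_j}^-\in N(v)\cap N(x_{i_h}^{-2})$, and apply Claim~\ref{cla-3-9} with $(a,b)=(i_h,i_j)$ to obtain $|N(x_{i_j}^{-2})\cap X^-|\ge m-k+2$ together with $N(x_{i_j}^{-2})\cap X^-\subseteq\{x_i^-:\,i\in[1,i_h)\cup[i_j,q]\}$. Depending on whether this set supplies a further neighbor $x_c^-$ with $c>i_j$, I would either apply Claim~\ref{cla-3-10} with $(a,b,c)=(i_h,i_j,c)$ for one more size-plus-localization bound on $N(x_c^{-2})\cap X^-$, or use a direct pigeonhole on $[1,i_h)$. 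Assembling the resulting inequalities against the disjoint decomposition of $[1,m]$ into $[1,i_h),\,[i_h,i_j),\,[i_j,q],\,(q,m]$ and using~\eqref{eqn-8}, \eqref{eqn-9}, and Claim~\ref{cla-3-11} should yield $m\le(7k-6)/5$; combined with Claim~\ref{cla-3.1}, equality holds throughout.

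Reading off the tightness: $p=m-k+2=2t+2$, $q=k=m-2t$, $i_p=\tfrac{3}{2}(m-k)+2=3t+2$, and $5m=7k-6$, so integrality forces $k\equiv3\pmod 5$, giving $k=5t+3,\,m=7t+3$ for some $t\ge 0$. For~(i), Claim~\ref{cla-3-5} at $z=u$ becomes $|N(u)\cap X^-|=2t+1$ with all these neighbors lying in $\{x_{m-2t}^-,\ldots,x_m^-\}$ (by $q=m-2t$); a short cycle-rotation argument through $u_0$ and the chord $ux_q^-$ then rules out any $u\ne x_m$, yielding $u=x_m$ and $v=x_m^+$. Statement~(ii) is Claim~\ref{cla-3-5} at $z=x_m$ read with equality. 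For~(iii), $|N(v)\cap X^-|=p=2t+2$ is immediate, and $i_1=1,\,i_2=2$ should follow by substituting the tight values into the localization of Claim~\ref{cla-3-9}: any strictly larger $i_1$ or $i_2$ would shrink a supporting interval below its required cardinality.

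The main obstacle is the simultaneous-equality bookkeeping: one must disjointly partition $[1,m]$ into intervals hosting the localized neighborhoods of $x_{i_h}^{-2},\,x_{i_j}^{-2}$, and (potentially) $x_c^{-2}$, then cancel the overlaps with $N(v)\cap X^-$ without double-counting. A secondary subtlety is the cycle-rotation argument in~(i); the natural longer cycle through $u_0$ and the chord $ux_q^-$ only strictly extends $C$ when $u$ sits at the very start of $x_m\overrightarrow{C}x_1$, so ruling out the intermediate positions of $u$ on this arc will require a careful case analysis.
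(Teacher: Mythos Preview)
Your overall plan—push the accumulated inequalities to simultaneous equality—is exactly what the paper does, but one key step is misdirected. After extracting an index with $x_{i_j}^-\in N(v)\cap N(x_{i_h}^{-2})$, you invoke Claim~\ref{cla-3-9} with $(a,b)=(i_h,i_j)$, obtaining only $N(x_{i_j}^{-2})\cap X^-\subseteq\{x_i^-:i\in[1,i_h)\cup[i_j,q]\}$. But $x_{i_j}^-\in N(v)$ already, so Claim~\ref{cla-3-8} applies with $a=i_j$ and yields the much tighter localization $N(x_{i_j}^{-2})\cap X^-\subseteq\{x_i^-:i\in[i_j,q]\}$. The paper does precisely this (writing $j$ for your $i_j$): it sets $\ell:=\max\{i:x_i^-\in N(x_j^{-2})\}$, obtains $|[j,\ell]|\ge m-k+2$, applies Claim~\ref{cla-3-10} with $(a,b,c)=(i_h,j,\ell)$ for $|[i_h,j)\cup[\ell,q]|\ge m-k+2$, and sums these together with $|[1,i_h]|\ge(m-k+2)/2$ and $|[q,m]|\ge m-k+1$ over the cover $[1,i_h]\cup\bigl([i_h,j)\cup[\ell,q]\bigr)\cup[j,\ell]\cup[q,m]$, whose pieces total $m+3$. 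That delivers $m\le(7k-6)/5$ cleanly. Your route via Claim~\ref{cla-3-9} introduces the spurious interval $[1,i_h)$ into the localization of $N(x_{i_j}^{-2})\cap X^-$, forces the case split on whether $c>i_j$ exists, and—assembling as you suggest—appears to give only $q\ge 2(m-k)+3$, hence $m\le(3k-3)/2$, which is strictly weaker than the needed bound for $k>3$. Also, Claim~\ref{cla-3-11} is not one of the inequalities driven to equality here (it was used inside Claim~\ref{cla-3-12}); in particular $i_p$ need not equal $3t+2$.

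Your handling of~(i) is also overcomplicated. No cycle-rotation case analysis is required: equality throughout~\eqref{eqn-9} forces $|N(u_0)\cap\{u\}|=1$, i.e.\ $u\in N(u_0)$, and then $uv\in E(x_m\overrightarrow{C}x_1^{-2})$ from~\eqref{eqn-7} gives $u=x_m$ immediately. For~(iii), equality in~\eqref{eqn-12} forces $i_h=h$, whence $i_s=s$ for all $s\le h$, so $\{x_1^-,\ldots,x_{t+1}^-\}\subseteq N(x_m^+)$; combined with $x_j^-\in N(x_m^+)$ and $j\in[t+2,3t+2]$, this yields $x_1^-,x_2^-\in N(x_m^+)$ in both the $t\ge1$ and $t=0$ cases.
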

\begin{proof} It follows from Claim \ref{cla-3-12} that there is an integer $j\in [1,m]\setminus\{i_h\}$ such that $x_j^-\in N(v)\cap N(x_{i_h}^{-2})$.
By applying Claim \ref{cla-3-8} with $a:={i_h}$, we have $N(x_{i_h}^{-2})\cap X^-\subseteq\{x_i^-:~i\in [i_h,q]\}$, and hence
$j\in [i_h,q]$.
As $N(v)\cap X^-=\{x_{i_1}^-,x_{i_2}^-,\ldots,x_{i_p}^-\}$ and $j\neq i_h$,
\begin{equation}\label{eqn-17}
j\in \{i_s:~s\in[h+1,p]\}.
\end{equation}

Let $\ell:=~\max\{i\in [1,m]:x_i^-\in N(x_j^{-2})\}$. Note that $x_j^-\in N(v)$.
By using Claim \ref{cla-3-8} with $a:=j$, we can derive that $j<q$, $N(x_j^{-2})\cap X^-\subseteq\{x_i^-: i\in [j, q]\}$ and $|N(x_j^{-2})\cap X^-|\geq m-k+2$. Together with the definition of $\ell$, we see that $\ell\leq q$,
\begin{equation}\label{eqn-18}
N(x_j^{-2})\cap X^-\subseteq\{x_i^-: i\in [j, \ell]\}
\end{equation}
and
\begin{equation}\label{eqn-19}
|[j,\ell]|\geq |N(x_j^{-2})\cap X^-|\geq m-k+2.
\end{equation}
It follows from \eqref{eqn-17} and \eqref{eqn-19} that
$$
\ell=j-1+|[j,\ell]|\geq (j-1)+(m-k+2)>j\geq i_{h+1}>i_h.
$$
Hence, $i_h<j<\ell\leq q$.
Observe that $x_{i_h}^-\in N(v)$, $x_j^-\in
N(x_{i_h}^{-2})$ and $x_{\ell}^-\in N(x_j^{-2})$. By applying Claim \ref{cla-3-10}  with $(a,b,c):=(i_h,j,\ell)$, we have
\begin{equation}\label{eqn-20}
N(x_{\ell}^{-2})\cap X^-\subseteq\{x_i^-:~i\in [i_h,j)\cup [\ell,q]\}
\end{equation}
and
\begin{equation}\label{eqn-21}
|[{i_h},j)\cup [\ell, q]|\geq |N(x_{\ell}^{-2})\cap X^-|\geq m-k+2.
\end{equation}
By summing the inequalities in \eqref{eqn-9}, \eqref{eqn-12},\eqref{eqn-19} and \eqref{eqn-21}, we get
\begin{eqnarray}\label{eqn-22}
&&|[q,m]|+|[1,i_h]|+|[j,\ell]|+|[i_h,j)\cup [\ell, q]|\cr
&\geq&(m-k+1)+\frac{m-k+2}{2}+(m-k+2)+(m-k+2),
\end{eqnarray}
i.e., $m+3\geq 7(m-k)/2+6$, which is equivalent to  $m\leq (7k-6)/5$. On the other hand, by Claim \ref{cla-3.1}, we have $m\geq\mu_{k+1}(G)\geq (7k-6)/5$. Therefore, $m=\mu_{k+1}(G)=(7k-6)/5$,  and hence $(k,m)=(5t+3,7t+3)$ holds for some integer $t\geq 0$. Moreover, the inequality in  \eqref{eqn-22} achieves the equality, meaning that all inequalities in \eqref{eqn-9}, \eqref{eqn-12},\eqref{eqn-19} and \eqref{eqn-21} achieve the equalities. In the following, we will show that these equalities lead to conditions (i),(ii) and (iii).

Note that the equalities in \eqref{eqn-9} implies that $u\in N(u_0)$ and  $|[q,m]|=|N(u)\cap X^-|=m-k+1$. Together with \eqref{eqn-7} and the definition of $q$, we can derive that $q=k=m-2t$, $(u,v)=(x_m,x_m^+)$ and
$$
N(x_m)\cap X^-=N(u)\cap X^-=\{x_i^-: i\in [q,m]\}=\{x_{m-i}^-: i\in [0,2t]\}.
$$
Hence, both (i) and (ii) are correct.

Now, consider the equalities in \eqref{eqn-12}, which implies that
$i_h=h=p/2=(m-k+2)/2=t+1$. Together with (i) and the definition of $N(v)\cap X^-$, we have
$$
|N(x_m^+)\cap X^-|=|N(v)\cap X^-|=p=2t+2
$$
and
\begin{equation}\label{eqn-23}
N(x_m^+)=N(v)\supseteq \{x_{i_s}^-:~s\in [1,h]\}=\{x_i^-:~i\in [1,t+1]\}.
\end{equation}
Moreover, by \eqref{eqn-17}, $j\in \{i_s:~s\in[h+1,p]\}$. This together with  Claim \ref{cla-3-11} implies that
\begin{equation}\label{eqn-24}
j\in [i_{h+1},i_p]\subseteq \left[i_h+1,\frac{3}{2}(m-k)+2\right]=[t+2,3t+2].
\end{equation}
Recall that $v=x_m^+$ and $x_j^-\in N(v)\cap N(x_{i_h}^{-2})$. Hence, $x_j^-\in N(x_m^+)$.
Together with \eqref{eqn-23} and \eqref{eqn-24}, we can derive that $x_1^-,x_2^-\in N(x_m^+)$ whether $t=0$ or not. Therefore,  (iii) is true. This completes the proof of
Claim \ref{cla-3-13}.
\end{proof}

\smallskip

For $i\in [1,m]$, define $C_i:=x_i\overrightarrow{C}x_{i+1}$, where the indices are taken modulo $m$. We call $C_i$ a \emph{bad interval} of $C$ if  there exists  $xy\in E(C_i)$ such that $x,y\in N(X^-)$. By our assumption, $C_m$ is a bad interval of $C$.
In order to prove that each of $C_1, C_2,\ldots, C_{m-1}$ is also a bad interval of $C$, we need the following three claims, the first one of which follows from the symmetry of $C_i$ and $C_m$ on $C$.

\smallskip

\begin{claim}\label{cla-3-14}
Let $i\in [1,m]$ and let $zz^+\in E(C_i)$. If $z,z^+\in N(X^-)$, then there exists an integer $t\geq 0$ such that
$(k,m)=(5t+3,7t+3)$. Moreover, the following conditions hold:
\begin{enumerate}
  \item[\emph{(i)}] $(z,z^+)=(x_i,x_i^+)$;
  \item[\emph{(ii)}] $N(x_i)\cap X^-=\{x_{i-j}^-:~j\in [0,2t]\}$;
  \item[\emph{(iii)}] $|N(x_i^+)\cap X^-|=2t+2$ and $x_{i+1}^-, x_{i+2}^-\in N(x_i^+)$.
\end{enumerate}
\end{claim}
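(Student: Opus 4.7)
The plan is to derive Claim \ref{cla-3-14} as a direct consequence of Claim \ref{cla-3-13} via a cyclic relabeling of $X = N_C(u_0)$. Recall that the argument leading to Claim \ref{cla-3-13} started by invoking Claim \ref{cla-3.4} to produce \emph{some} bad edge $uv$, and then fixed, ``by symmetry'', the choice $uv \in E(x_m\overrightarrow{C}x_1)$; the subsequent analysis extracted very specific structural information about the interval $C_m$. The task now is to observe that the same argument applies when any other $C_i$ contains a bad edge, since the labeling $x_1, x_2, \ldots, x_m$ of $X$ is only determined up to a cyclic rotation around $\overrightarrow{C}$.

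To set this up, I would fix an arbitrary bad edge $zz^+ \in E(C_i)$ with $z, z^+ \in N(X^-)$ and define the shifted listing $y_j := x_{i+j}$ for $j \in [1, m]$ (indices taken modulo $m$, with $x_0 := x_m$). Then $y_1, \ldots, y_m$ is again a listing of $N_C(u_0)$ in cyclic order along $\overrightarrow{C}$, with $y_m = x_i$ and $y_1 = x_{i+1}$, so the edge $zz^+$ lies in $y_m \overrightarrow{C} y_1$. In this relabeled setting, $zz^+$ occupies exactly the role played by $uv$ in the argument that produced Claim \ref{cla-3-13}.

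The key step is then to note that every ingredient used between the ``by symmetry'' remark and the statement of Claim \ref{cla-3-13}, namely Claims \ref{cla-3.1}--\ref{cla-3.4}, Claims \ref{cla-3-5}--\ref{cla-3-12}, together with Lemmas \ref{lem-1} and \ref{lem-2}, is formulated purely in terms of $u_0$, $C$, $X^-$, $X^+$, the cyclic order of $X$ along $\overrightarrow{C}$, and the chosen bad edge; none of them singles out the specific index $1$ or $m$. Hence the entire chain of deductions goes through verbatim with $x_j$ replaced by $y_j$ and with $uv$ replaced by $zz^+$. Applying Claim \ref{cla-3-13} in the shifted labeling produces an integer $t \geq 0$ with $(k,m) = (5t+3, 7t+3)$, together with $(z, z^+) = (y_m, y_m^+)$, the description $N(y_m) \cap Y^- = \{y_{m-j}^- : j \in [0, 2t]\}$, and $|N(y_m^+) \cap Y^-| = 2t+2$ with $y_1^-, y_2^- \in N(y_m^+)$. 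Translating back via $y_j = x_{i+j}$ gives exactly conclusions (i), (ii), (iii).

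The only real obstacle is this translation invariance; verifying it amounts to inspecting the proof of Claim \ref{cla-3-13} and confirming that nothing in the chain of arguments uses the labels $x_1$ or $x_m$ in a way that singles them out, beyond serving as the endpoints of the interval that contains the bad edge. Once that inspection is carried out, the proof of Claim \ref{cla-3-14} collapses to the single observation that one may apply Claim \ref{cla-3-13} after cyclically rotating the labeling of $X$ so that $x_i$ becomes the new $y_m$.
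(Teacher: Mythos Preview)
Your proposal is correct and matches the paper's approach exactly: the paper disposes of Claim~\ref{cla-3-14} with the single remark that it ``follows from the symmetry of $C_i$ and $C_m$ on $C$,'' and your cyclic relabeling argument is precisely what makes that symmetry rigorous.
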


\smallskip

Noting that $(z,z^-,X^+)$ plays similar role in $\overleftarrow{C}$ as $(z,z^+,X^-)$ in $C$, we have the following analogy of Claim \ref{cla-3-14}.

\smallskip

\begin{claim}\label{cla-3-15}
Let $i\in [2,m+1]$ and let $z^-z\in E(C_{i-1})$. If $z^-,z\in N(X^+)$, then there exists an integer $t\geq 0$ such that
$(k,m)=(5t+3,7t+3)$. Moreover, the following conditions hold:
\begin{enumerate}
  \item[\emph{(i)}] $(z,z^-)=(x_i,x_i^-)$;
  \item[\emph{(ii)}] $N(x_i)\cap X^+=\{x_{i+j}^+:~j\in [0,2t]\}$;
  \item[\emph{(iii)}] $|N(x_i^-)\cap X^+|=2t+2$ and $x_{i-1}^+,x_{i-2}^+\in N(x_i^-)$.
\end{enumerate}
\end{claim}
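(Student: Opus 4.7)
The plan is to deduce Claim \ref{cla-3-15} from Claim \ref{cla-3-14} by applying the latter to the reversed cycle $\overleftarrow{C}$, exactly as the remark preceding the statement suggests. The underlying point is that the entire development through Claim \ref{cla-3-14} depended only on a chosen direction along the longest cycle $C$ and on the induced ordering $x_1,\ldots,x_m$ of $N(u_0)$; reversing this direction exchanges predecessors and successors, and hence interchanges the roles of $X^-$ and $X^+$.

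Concretely, I would set $C':=\overleftarrow{C}$ with its inherited reverse orientation and relabel the neighbours of $u_0$ by $y_j:=x_{m+1-j}$ for $j\in[1,m]$, so that $y_1,\ldots,y_m$ appear in this order along $\overrightarrow{C'}$. Since $C'$ is still a longest cycle of $G$ and $u_0$ is still a vertex of $V(G)-V(C')$ of the largest possible degree among such vertices, Claims \ref{cla-3.1} through \ref{cla-3-14} apply verbatim to the pair $(C',u_0)$ with $\pm$ now referring to $C'$. Under the reversal one has $v_{C'}^+=v_C^-$ and $v_{C'}^-=v_C^+$ for every $v\in V(C)$; in particular $X_{C'}^-=X^+$ and $X_{C'}^+=X^-$.

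Next I would translate the hypothesis: the edge $z^-z\in E(C_{i-1})=E(x_{i-1}\overrightarrow{C}x_i)$ is the same edge as $z\,z_{C'}^+\in E(y_{i'}\overrightarrow{C'}y_{i'+1})=E(C'_{i'})$, where $i'\equiv m+1-i\pmod m$ and $z_{C'}^+=z_C^-$; moreover $z^-,z\in N(X^+)=N(X_{C'}^-)$. These are precisely the hypotheses of Claim \ref{cla-3-14} applied to the pair $(C',u_0)$ at index $i'$. That claim then yields an integer $t\geq 0$ with $(k,m)=(5t+3,7t+3)$ together with the three conclusions of Claim \ref{cla-3-14}, stated in terms of the $y$'s and the $\pm_{C'}$ operations.

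The final step is to translate these conclusions back via $y_{i'}=x_i$, $y_{i'-j}=x_{i+j}$, $y_{i'+1}=x_{i-1}$, $y_{i'+2}=x_{i-2}$, and the identification $(\cdot)_{C'}^{\pm}=(\cdot)_C^{\mp}$; these substitutions immediately give $(z,z^-)=(x_i,x_i^-)$, $N(x_i)\cap X^+=\{x_{i+j}^+:j\in[0,2t]\}$, and $|N(x_i^-)\cap X^+|=2t+2$ with $x_{i-1}^+,x_{i-2}^+\in N(x_i^-)$, which is exactly Claim \ref{cla-3-15}. I do not anticipate any substantive obstacle; the only care required is the index book-keeping under reversal, especially the modular handling at the boundary case $i=m+1$ where $i'\equiv 0\equiv m\pmod m$.
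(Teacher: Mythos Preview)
Your proposal is correct and follows exactly the paper's approach: the paper does not give a separate proof of Claim~\ref{cla-3-15} but simply notes that $(z,z^-,X^+)$ plays the same role in $\overleftarrow{C}$ as $(z,z^+,X^-)$ does in $C$, and derives the claim as the direct analogue of Claim~\ref{cla-3-14}. Your detailed index bookkeeping via $y_j=x_{m+1-j}$ and the identification $(\cdot)_{C'}^{\pm}=(\cdot)_C^{\mp}$ is a faithful and careful unpacking of precisely that symmetry, including the boundary case $i=m+1$.
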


\smallskip

\begin{claim}\label{cla-3-16}
For $i\in [1,m]$, the following statements are equivalent:
\begin{itemize}
  \item[\emph{(a)}] $C_i$ is a bad interval of $C$;
  \item[\emph{(b)}] $x_i^+\in N(X^-)$;
  \item[\emph{(c)}] $x_{i+1}^-\in N(X^+)$;
  \item[\emph{(d)}] $|V(C_i)|$ is even.
\end{itemize}
\end{claim}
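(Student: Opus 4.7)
The plan relies on two structural inputs. First, every $x_j\in X$ lies in $N(X^-)\cap N(X^+)$, since $x_jx_j^-\in E(C)$ with $x_j^-\in X^-$ and $x_jx_j^+\in E(C)$ with $x_j^+\in X^+$; in particular both $x_i$ and $x_{i+1}$ belong to $A:=V(C)\cap N(X^-)$. Second, Claim \ref{cla-3.3}(i) says every edge of $C$ has at least one endpoint in $A$, and the analogous fact for $N(X^+)$ holds by Claim \ref{cla-3.3}(ii). Writing $\ell:=|V(C_i)|-2\geq 0$ and $C_i$ as $x_i,z_0,z_1,\ldots,z_{\ell-1},x_{i+1}$ (with $z_0=x_i^+$ and $z_{\ell-1}=x_{i+1}^-$ whenever $\ell\geq 1$), I then work through the equivalences.

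For (b)$\Rightarrow$(a), the edge $x_ix_i^+\in E(C_i)$ has both endpoints in $N(X^-)$, so $C_i$ is bad. For (a)$\Rightarrow$(b) together with (a)$\Rightarrow$(c), apply Claim \ref{cla-3-14} to an arbitrary bad edge $zz^+\in E(C_i)$: its conclusion (i) forces $(z,z^+)=(x_i,x_i^+)$, giving $x_i^+\in N(X^-)$, i.e., (b); and its conclusion (iii) gives $x_{i+1}^-\in N(x_i^+)$, and since $x_i\in X$ forces $x_i^+\in X^+$, this yields $x_{i+1}^-\in N(X^+)$, i.e., (c). For (c)$\Rightarrow$(b), the edge $x_{i+1}^-x_{i+1}\in E(C_i)=E(C_{(i+1)-1})$ has both endpoints in $N(X^+)$, so Claim \ref{cla-3-15} applied with index $i+1\in[2,m+1]$ yields (via its conclusion (iii)) $x_i^+\in N(x_{i+1}^-)$; since $x_{i+1}^-\in X^-$, this gives $x_i^+\in N(X^-)$, which is (b).

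Finally, I prove (a)$\Leftrightarrow$(d) by an alternation argument on $A$-membership along $C_i$. If $|V(C_i)|$ is even, the vertex sequence of $C_i$ has even length with both endpoints in $A$; a strict alternation between $A$ and $V(C)\setminus A$ would require the length to be odd, hence such an alternation is impossible, and by Claim \ref{cla-3.3}(i) (which bans two consecutive vertices outside $A$) some consecutive pair on $C_i$ must both lie in $A$, producing a bad edge. Conversely, if $C_i$ is bad, Claim \ref{cla-3-14}(i) pins the bad edge down uniquely as $x_ix_i^+$, so the sub-path $z_0,z_1,\ldots,z_{\ell-1},x_{i+1}$ carries no pair of consecutive vertices both in $A$; Claim \ref{cla-3.3}(i) then forces strict alternation along this sub-path, and since both its endpoints lie in $A$, its $\ell+1$ vertices must constitute an odd count, so $\ell$ is even and $|V(C_i)|=\ell+2$ is even. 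The degenerate case $\ell=0$ is handled vacuously: $|V(C_i)|=2$ is even and $x_ix_{i+1}$ is plainly bad.

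The only real subtlety is the index-bookkeeping step of invoking Claim \ref{cla-3-15} on an edge of $C_i$ through the shift $i\mapsto i+1$, converting the $N(X^+)$-hypothesis (c) into the $N(X^-)$-statement (b). Otherwise the argument reduces to the uniqueness of the bad edge in Claim \ref{cla-3-14} together with the parity forced by Claim \ref{cla-3.3}.
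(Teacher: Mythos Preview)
Your proof is correct and follows essentially the same approach as the paper: both derive (a)$\Leftrightarrow$(b) from Claim~\ref{cla-3-14}(i), and both obtain the equivalence with (d) via the alternation argument forced by Claim~\ref{cla-3.3}(i) together with the uniqueness of the bad edge from Claim~\ref{cla-3-14}(i). The only minor variation is in how (c) is linked: the paper handles (c)$\Leftrightarrow$(d) by invoking the $\overleftarrow{C}$--symmetry (so that $(x_{i+1},x_{i+1}^-,X^+)$ plays the role of $(x_i,x_i^+,X^-)$), whereas you route (c) through (a) and (b) by explicitly invoking the conclusions~(iii) of Claims~\ref{cla-3-14} and~\ref{cla-3-15}; both routes use the same lemmas and the same structural content.
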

\begin{proof} Note that $x_i\in N(X^-)$. By Claim \ref{cla-3-14}, we see that (a) is
equivalent to (b). In order to prove Claim \ref{cla-3-16}, it suffices to show that each of (b) and (c) is equivalent to (d).

Suppose $x_i^+\in N(X^-)$. By Claims \ref{cla-3.3} and \ref{cla-3-14}, we see that
\begin{equation}\label{eqn-25}
|\{x,y\}\cap N(X^-)|=1~\makebox{for all $xy\in E(x_i^+\overrightarrow{C}x_{i+1})$.}
\end{equation}
Then, along $C$, the vertices on $x_i^+\overrightarrow{C}x_{i+1}$ alternate between vertices in $N(X^-)$ and vertices in $V(G)\setminus N(X^-)$.
As $x_i^+,x_{i+1}\in N(X^-)$, $x_i^+\overrightarrow{C}x_{i+1}$ contains odd number of vertices, and hence $|V(C_i)|$ is even.

Suppose next that $|V(C_i)|$ is even. Then, $x_i^+\overrightarrow{C}x_{i+1}$ has odd number of vertices. Together with $x_{i+1}\in N(X^-)$ and \eqref{eqn-25}, we can derive that $x_i^+\in N(X^-)$.

From proofs above, we see that (b) is equivalent to (d). As $(x_{i+1},x_{i+1}^-,X^+)$ plays similar role in
$\overleftarrow{C}$ as $(x_i,x_i^+,X^-)$ in $C$, we also know that (c) is equivalent to (d). Hence, Claim \eqref{cla-3-16} is true.
\end{proof}

\smallskip

\begin{claim}\label{cla-3-17}
For $i\in [1,m]$, $C_i$ is a bad interval of $C$.
\end{claim}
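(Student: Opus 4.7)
The plan is to prove Claim \ref{cla-3-17} by cyclic induction starting from $C_m$, which we already know to be a bad interval by Claim \ref{cla-3.4} together with Claim \ref{cla-3-13}(i) (since $u,v\in N(X^-)$ and $uv\in E(C_m)$). The inductive step I would establish is the implication: \emph{if $C_i$ is a bad interval, then so is $C_{i+1}$}, with indices taken modulo $m$. Iterating this implication through $C_m\to C_1\to C_2\to\cdots\to C_{m-1}$ then covers every index in $[1,m]$.

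To prove the implication, suppose $C_i$ is a bad interval. By definition there is an edge $zz^+\in E(C_i)$ with $z,z^+\in N(X^-)$, so Claim \ref{cla-3-14} applies and supplies, in particular, $x_{i+2}^-\in N(x_i^+)$. Since $x_i^+\in X^+$, this forces $x_{i+2}^-\in N(X^+)$. Now I would invoke the equivalence (a) $\Leftrightarrow$ (c) of Claim \ref{cla-3-16} applied to index $i+1$: the condition $x_{(i+1)+1}^-=x_{i+2}^-\in N(X^+)$ is precisely the statement that $C_{i+1}$ is a bad interval. This closes the induction.

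I do not foresee a substantive obstacle here — Claim \ref{cla-3-14} delivers exactly the vertex of $N(X^+)$ that the equivalence in Claim \ref{cla-3-16} needs — so the entire argument is a short bootstrap off the structural machinery already established in the preceding claims. The only care needed is that cyclic indexing behaves correctly (so that $x_{m+1}=x_1$, $x_{m+2}=x_2$, etc.), which matches the paper's convention in the definition of $C_i$.
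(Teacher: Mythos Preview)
Your proposal is correct and essentially identical to the paper's own proof: both establish the base case $C_m$ from the earlier choice of $uv\in E(x_m\overrightarrow{C}x_1)$ with $u,v\in N(X^-)$, and both prove the inductive step by applying Claim~\ref{cla-3-14} to obtain $x_{i+2}^-\in N(x_i^+)\subseteq N(X^+)$ and then invoking the equivalence (a)\,$\Leftrightarrow$\,(c) of Claim~\ref{cla-3-16} at index $i+1$.
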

\begin{proof} By our assumption, $C_m$ is a bad interval of $C$.
In order to prove Claim \ref{cla-3-17}, it suffices to show that for each  $i\in [1,m]$,
\begin{equation}\label{eqn-26}
\text{if $C_i$ is a bad interval of $C$, then so is $C_{i+1}$.}
\end{equation}
Suppose $C_i$ is a bad interval of $C$. Then, there exists an edge $zz^+\in E(C_i)$ such that $z,z^+\in N(X^-)$. By Claim \ref{cla-3-14}, we have $(z,z^+)=(x_i,x_i^+)$ and $x_{i+2}^-\in N(x_i^+)$, which means that  $x_{i+2}^-\in N(X^+)$.  By Claim \ref{cla-3-16}, we see that $C_{i+1}$ is also a bad interval of $C$. Hence, \eqref{eqn-26} is true. This completes the proof of Claim \ref{cla-3-17}.
\end{proof}

\smallskip

Let $i$ be an integer with $i\in [1,m]$. By Claim \ref{cla-3-17}, both $C_i$ and $C_{i-1}$ are  bad intervals of $C$. Together with Claim \ref{cla-3-16}, we see that $x_i^+\in N(X^-)$ and $x_i^-\in N(X^+)$. By applying  Claim \ref{cla-3-14} with $(z,z^+):=(x_i,x_i^+)$, we see that there is a  non-negative integer $t$ such that
\begin{equation}\label{eqn-27}
 (k,m)=(5t+3,7t+3),
\end{equation}
\begin{equation}\label{eqn-28}
 N(x_i)\cap X^-=\{x_{i-j}^-:~j\in [0,2t]\},
\end{equation}
\begin{equation}\label{eqn-29}
 |N(x_i^+)\cap X^-|=2t+2
\end{equation}
and
\begin{equation}\label{eqn-30}
x_{i+1}^-,x_{i+2}^-\in N(x_i^+).
\end{equation}
Similarly, by applying  Claim \ref{cla-3-15} with $(z,z^-):=(x_i,x_i^-)$, we have
\begin{equation}\label{eqn-31}
 N(x_i)\cap X^+=\{x_{i+j}^+:~j\in [0,2t]\}.
\end{equation}
If $t\geq 1$, then by \eqref{eqn-31}, we have $x_{i+1}^+\in N(x_i)$.  Together with \eqref{eqn-30}, we see that $x_{i+2}\overrightarrow{C}x_ix_{i+1}^+\overrightarrow{C}
x_{i+2}^-x_i^+\overrightarrow{C}x_{i+1}u_0x_{i+2}$
is a cycle longer than $C$, a contradiction. Hence, $t=0$.

It follows from \eqref{eqn-27}-\eqref{eqn-31} that $k=m=3$ and for each $i\in [1,3]$,
\begin{equation}\label{eqn-32}
N(x_i)\cap (X^-\cup X^+)=\{x_i^-,x_i^+\}
\end{equation}
and
\begin{equation}\label{eqn-33}
N(x_i^+)\cap X^-=\{x_{i+1}^-,x_{i+2}^-\},
\end{equation}
where the indices are taken modulo $3$. By symmetry, we also have
\begin{equation}\label{eqn-34}
N(x_i^-)\cap X^+=\{x_{i-1}^+,x_{i-2}^+\}, ~i=1,2,3.
\end{equation}

We claim that
\begin{equation}\label{eqn-35}
V(G)-V(C)=\{u_0\}.
\end{equation}
Suppose, to the contrary, that \eqref{eqn-35} is false. Then, there exists a vertex $u_0'\in (V(G)-V(C))-\{u_0\}$. By Claim \ref{cla-3.2}, both $X^-\cup\{u_0\}$ and $X^-\cup\{u_0'\}$ are independent sets of $G$. Note that $|(X^-\cup\{u_0\})\cap (X^-\cup\{u_0'\})|=|X^-|=k$. By Lemma \ref{lem-1} (i), we can derive that $X^-\cup\{u_0,u_0'\}$ is an independent set of $G$. On the other hand, by \eqref{eqn-33}, we have  $x_i^+\in N(X^-)$. By applying Lemma \ref{lem-1} (ii) with $(A,x)=(X^-\cup\{u_0,u_0'\}, x_i^+)$, we obtain that
$$
|N(x_i^+)\cap(X^-\cup\{u_0,u_0'\})|\geq  |X^-\cup\{u_0,u_0'\}|-k+1=3.
$$
This together with \eqref{eqn-33} implies that $N(x_i^+)\cap \{u_0,u_0'\}\neq\emptyset$, and hence at least one of $X^+\cup\{u_0\}$ and $X^+\cup\{u_0'\}$ is not an independent set of $G$, contrary to Claim \ref{cla-3.2}. Hence, \eqref{eqn-35} is true.

We claim next that
\begin{equation}\label{eqn-36}
x_1^{+2}= x_2^-.
\end{equation}
By way of contradiction, assume that \eqref{eqn-36} is false. Then, $x_1^{+2}\neq x_2^-$. Together with \eqref{eqn-33}, we see that $N(x_1^+)\supseteq\{x_1,x_1^{+2},x_2^-,x_3^-\}$, and hence $d_G(x_1^+)\geq 4$. Note that
$X^+\cup\{u_0\}$ is an independent in $G$ and $x_1^{+2}\in N(X^+)$. By applying Lemma \ref{lem-1} (ii) with $(A,x)=(X^+\cup\{u_0\},x_1^{+2})$, we can derive that
\begin{equation}\label{eqn-37}
|N(x_1^{+2})\cap (X^+\cup\{u_0\})|\geq |X^+\cup\{u_0\}|-k+1=2.
\end{equation}
Recall that $C_1$ is a bad interval of $C$. By Claim \ref{cla-3-16}, $|V(C_1)|$ is even, and hence $x_1^{+2}\notin N(u_0)$. This together with \eqref{eqn-37} implies that
$\{x_2^+,x_3^+\}\cap N(x_1^{+2})\neq\emptyset$. Say $x_j^+\in N(x_1^{+2})$ for some $j\in\{2,3\}$. Set
$$
C'=x_1^{+2}x_j^+\overrightarrow{C}x_1u_0x_j\overleftarrow{C}x_1^{+2}.
$$
Then, $C'$ is a longest cycle in $G$ such that $x_1^+\in V(G)-V(C')$. By the choice of $(C,u_0)$, we have $d_G(u_0)\geq d_G(x_1^+)$, which implies that $3\geq 4$, a contradiction. Hence, \eqref{eqn-36} is true. By symmetry, we also have $x_2^{+2}= x_3^-$ and $x_3^{+2}= x_1^-$. It follows that $C=x_1x_1^+x_2^-x_2x_2^+x_3^-x_3x_3^+x_1^-x_1$  (see Fig. \ref{fig-4}).
\begin{figure}[h]
\begin{center}
\psfrag{A}{$x_1$}
\psfrag{B}{$x_1^+$}
\psfrag{C}{$x_2^-$}
\psfrag{D}{$x_2$}
\psfrag{E}{$x_2^+$}
\psfrag{F}{$x_3^-$}
\psfrag{G}{$x_3$}
\psfrag{H}{$x_3^+$}
\psfrag{I}{$x_1^-$}
\psfrag{J}{$u_0$}
\includegraphics[width=5cm]{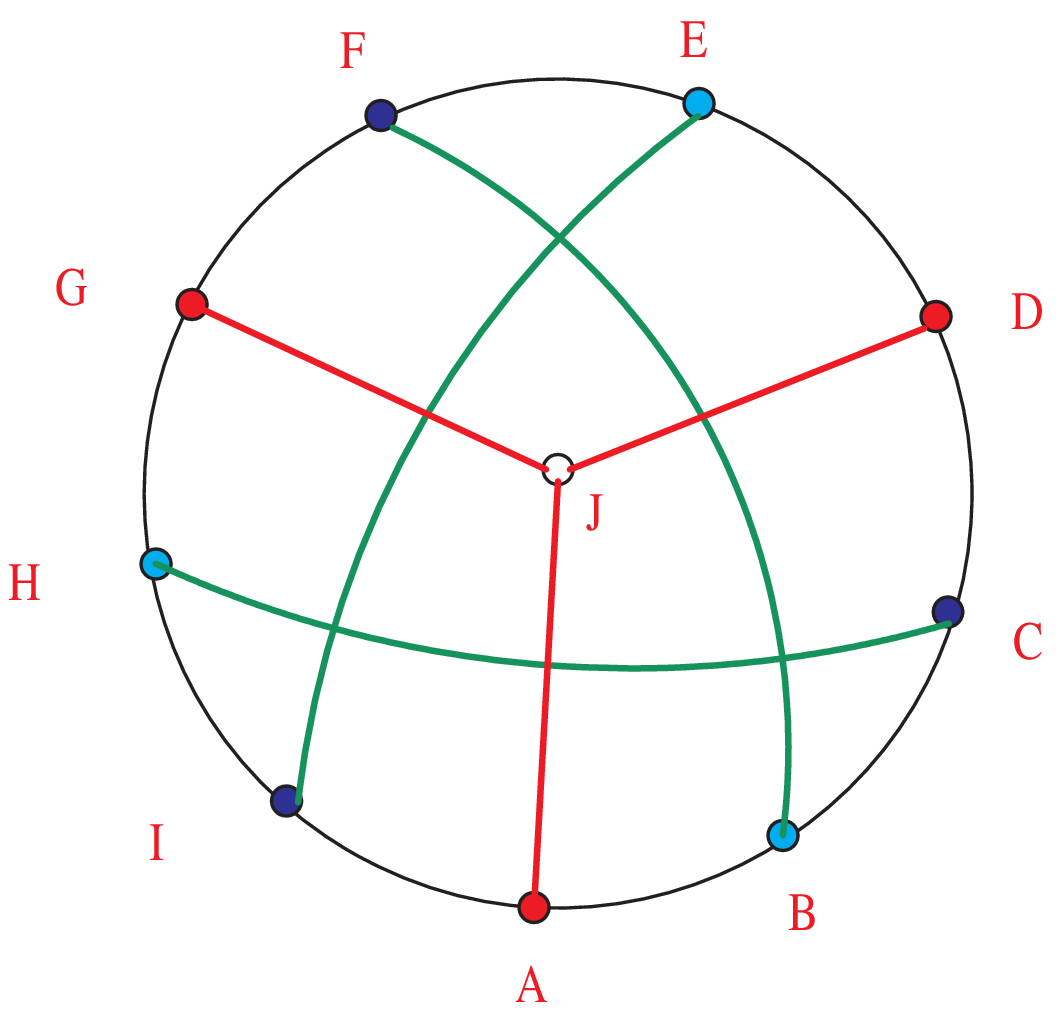}\\
\caption{The cycle $C$ and the graph $G$.}
 \label{fig-4}
\end{center}
\end{figure}

Finally,  we claim  that
\begin{equation}\label{eqn-38}
N(x_1)=\{u_0,x_1^+,x_1^-\}.
\end{equation}
For, otherwise, $d_G(x_1)\geq 4$ and  $C^*:=x_1^+x_3^-x_2^+x_1^-x_3^+x_3u_0x_2x_2^-x_1^+$ is a longest cycle of $G$ such that $(C^*,x_1)$ contradicts the choice of $(C,u_0)$. Therefore, \eqref{eqn-38} is true. By symmetry, we also have
\begin{equation}\label{eqn-39}
N(x_i)=\{u_0,x_i^+,x_i^-\}, ~i=2,3.
\end{equation}
As both $X^+$ and $X^-$ are independent sets of $G$,  by \eqref{eqn-33}, \eqref{eqn-34}, \eqref{eqn-38} and \eqref{eqn-39}, we can derive that $E(G[V(C)])=E(C)\cup\{x_1^+x_3^-,x_2^+x_1^-, x_3^+x_2^-\}$. This together with $N(u_0)=\{x_1,x_2,x_3\}$ and \eqref{eqn-35} implies that $G$ is isomorphic to the Petersen graph. This completes the proof of Theorem \ref{thm-main}.

\smallskip

\noindent{\bf Declaration of competing interest}

The authors declare that they do not have any commercial or associative interest that represents a conflict of interest in connection with the work submitted.

\end{document}